\newcommand\R{{\mathbf{R}}}
\newcommand\A{{\mathcal{A}}}
\renewcommand\P{{\mathcal{P}}}
\newcommand\C{{\mathbf{C}}}
\newcommand\Z{{\mathbf{Z}}}
\newcommand\N{{\mathbf{N}}}
\newcommand\ZN{\mathbf{Z}_{N}}
\newcommand\E{\mathbf{E}}
\newcommand\T{\mathbf{T}}
\theoremstyle{plain}
  \newtheorem{theorem}{Theorem}
  \newtheorem{proposition}{Proposition}
  \newtheorem{lemma}{Lemma}
  \newtheorem{corollary}{Corollary}
\theoremstyle{remark}
  \newtheorem{remark}[subsection]{Remark}
  \newtheorem{remarks}[subsection]{Remarks}
\theoremstyle{definition}
\begin{document}

\title{Intersective polynomials and the primes}

\author{Th\'ai Ho\`ang L\^e}
\address{UCLA Department of Mathematics, Los Angeles, CA 90095-1596.}
\email{leth@math.ucla.edu}

\begin{abstract}
Intersective polynomials are polynomials in $\Z[x]$ having roots
every modulus. For example, $P_1(n)=n^2$ and $P_2(n)=n^2-1$ are
intersective polynomials, but $P_3(n)=n^2+1$ is not. The purpose of this note is to deduce, using results of Green-Tao \cite{gt-chen} and Lucier \cite{lucier}, that for any intersective polynomial $h$, inside any subset of positive relative density of the primes, we can find distinct primes $p_1, p_2$ such that
$p_1-p_2=h(n)$ for some integer $n$. Such a conclusion also holds in the Chen primes (where by a Chen prime we mean a prime number $p$ such that $p+2$ is the product of at most 2 primes).
\end{abstract}
\maketitle

\section{Introduction}
In the late 1970s, S\'{a}rk\"{o}zy and Furstenberg independently
proved the following, which had previously been conjectured by
Lovasz:

\begin{theorem}[S\'{a}rk\"{o}zy \cite{sarkozy1}, Furstenberg \cite{f1}, \cite{f2}]
If  $A$ is a subset of positive upper density of $\Z$, then there
are two distinct elements of $A$ whose difference is a perfect
square.
\end{theorem}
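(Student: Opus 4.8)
The plan is to establish the finitary statement of which Theorem~1 is the soft consequence: \emph{for every $\delta>0$ there is $N_0(\delta)$ so that every $A\subseteq\{1,\dots,N\}$ with $|A|\ge\delta N$ and $N\ge N_0(\delta)$ contains two distinct elements differing by a perfect square.} Since ``containing a square difference'' is translation invariant, a set of positive upper density contains, in arbitrarily long intervals, subsets of density bounded below, so the finitary version immediately yields the theorem. Fix then $A\subseteq\{1,\dots,N\}$ of density $\delta$ and suppose, for contradiction, that $a-a'$ is never a nonzero square for $a,a'\in A$.

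First I would set up the circle-method count. Put $Q=\lfloor N^{1/2}/2\rfloor$ and pick a prime $M\in(2N,4N]$, so that no wrap-around occurs in $\Z_M$ and
$$\Lambda(A):=\sum_{q=1}^{Q}\sum_{x\in\Z_M}1_A(x)\,1_A(x+q^2)=\#\{(x,q):1\le q\le Q,\ x\in A,\ x+q^2\in A\}=0.$$
Fourier inversion on $\Z_M$ gives $\Lambda(A)=M^{-1}\sum_{\xi\in\Z_M}|\widehat{1_A}(\xi)|^2\,\overline{S(\xi)}$, where $S(\xi)=\sum_{q=1}^{Q}e(q^2\xi/M)$ is a quadratic Weyl sum. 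The frequency $\xi=0$ contributes $M^{-1}|A|^2Q\asymp\delta^2NQ$, so $\Lambda(A)=0$ together with Parseval ($M^{-1}\sum_\xi|\widehat{1_A}(\xi)|^2=|A|$) forces
$$\delta Q\ \ll\ \max_{\xi\neq0}|S(\xi)|.$$

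The next step is the major/minor arc dichotomy for $S$, which is the heart of the matter. By Weyl's inequality, $|S(\xi)|\le\tfrac12\delta Q$ unless $\xi/M$ lies within $\delta^{-O(1)}Q^{-2}$ of a reduced fraction $a/r$ with $r\le\delta^{-O(1)}$ (the evaluation of complete Gauss sums is what makes the major-arc contribution genuinely large and unavoidable). Hence a positive proportion of the $\ell^2$-mass of $\widehat{1_A}$ sits on such major arcs, and — pigeonholing among the $\le\delta^{-O(1)}$ arcs and then along a subinterval on which the relevant rational phase is essentially constant, exactly as in the density-increment proof of Roth's theorem — one extracts an arithmetic progression $P\subseteq\{1,\dots,N\}$ with common difference $r\le\delta^{-O(1)}$ and length $|P|\gg\delta^{\,O(1)}N$, on which $A$ has density at least $\delta+\delta^{C}$ for some absolute constant $C\ge1$.

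Finally I would run the iteration, using the one feature special to squares: ``no square difference'' is preserved only under dilation by perfect squares. To arrange this, split $P$ into its $r$ subprogressions of common difference $r^2$ and pass to the densest, $P'$; this loses a further factor $r$ in the length but keeps the density gain up to a constant. The affine map identifying $P'$ with $\{1,\dots,N'\}$, $N'\gg\delta^{\,O(1)}N$, carries $A\cap P'$ to $A'\subseteq\{1,\dots,N'\}$ of density $\ge\delta+\delta^{O(1)}$ and sends a difference $m^2$ in $A'$ to $(rm)^2$ in $A$, so $A'$ again has no square difference. Iterating, the density increases by a fixed power of $\delta$ at each stage while $N$ shrinks only by a $\delta$-dependent factor, so after $\delta^{-O(1)}$ steps the density would exceed $1$ — impossible once $N$ was large enough in terms of $\delta$. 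This is the desired contradiction, and it even yields an effective bound of the shape $\delta\ll(\log N)^{-c}$ for some absolute $c>0$. I expect the genuinely delicate point to be the quantitative major/minor arc split with fully explicit dependence on $\delta$, since that dependence simultaneously controls the modulus $r$, the density increment, and the rate at which $N$ decays, and hence whether the whole iteration can be carried to completion. (One can instead follow Furstenberg and recast everything ergodically via the correspondence principle, replacing the Weyl-sum input by the spectral theorem plus the fact that $\sum_{q\le Q}e(q^2\theta)$ has no cancellation at rational $\theta$; but the Fourier-analytic route above is the one in the spirit of the Green--Tao and Lucier results used later in this note.)
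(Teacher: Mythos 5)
First, a point of context: the paper offers no proof of this statement at all --- it is quoted as a classical theorem of S\'ark\"ozy and Furstenberg, and the only result of this type the paper actually uses is Lucier's quantitative generalization, imported as a black box. So the comparison here is with S\'ark\"ozy's original argument, and your outline --- circle-method count, Gauss-sum/major-arc analysis of the quadratic Weyl sum, density increment on a progression, passing to common difference $r^2$ so that the square-difference-free property is inherited under rescaling, and iteration --- is exactly that argument. The reduction to the finitary statement, the choice $M>2N\geq 8Q^2$ to avoid wrap-around, and the identification of the dilation-by-$r^2$ step as the place where the structure of the squares enters are all correct and are the right load-bearing points.

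There is, however, one genuine gap: the spectral step is vacuous as written, because you work with $1_A$ rather than with its balanced function. The displayed conclusion $\delta Q\ll\max_{\xi\neq 0}|S(\xi)|$ holds for \emph{every} set $A$ --- indeed for $\xi=1$ all the phases $q^2/M$ lie in $[0,1/8]$, so $|S(1)|\geq Q\cos(\pi/4)\gg Q$ unconditionally --- and the stronger statement you actually go on to use, namely $\sum_{\xi\neq 0}|\widehat{1_A}(\xi)|^2|S(\xi)|\gg |A|^2Q$, is equally information-free: for a perfectly uniform set one has $\widehat{1_A}(\xi)\approx\delta\,\widehat{1_{[1,N]}}(\xi)$ for small $\xi$, and these near-zero frequencies (all lying on the major arc at $0/1$) already contribute $\gg\delta^2MNQ\gg|A|^2Q$. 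Hence ``a positive proportion of the $\ell^2$-mass of $\widehat{1_A}$ sits on major arcs'' is true of every set of density $\delta$ and cannot by itself yield a density increment. The standard repair is to run the whole computation with $f=1_A-\delta 1_{[1,N]}$: the main term $\delta^2\,\sharp\{(x,q):x,\,x+q^2\in[1,N]\}\gg\delta^2NQ$ then appears explicitly, $\widehat f(0)=0$, largeness of $\widehat f$ near the zero frequency genuinely records failure of equidistribution of $A$ on long subintervals (and so still feeds the increment), and one extracts a single frequency with $|\widehat f(\xi_0)|\gg\delta^{O(1)}N$ and $|S(\xi_0)|\gg\delta^{O(1)}Q$ simultaneously. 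After that fix, your major-arc analysis, the increment, and the iteration go through as you describe, and the rest of the sketch is sound.
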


While Furstenberg used ergodic theory, S\'{a}rk\"{o}zy actually proved the
following finitary, quantitative form:

\begin{theorem}[S\'{a}rk\"{o}zy] \label{t1} Let $\delta>0$. Then provided $N$ is sufficiently large depending
on $\delta$, $N>N_0(\delta)$, any subset $A$ of $\{1, \ldots, N\}$
of size $\delta N$ contains two distinct elements $a, a' \in A$ such
that $a-a'$ is a perfect square.
\end{theorem}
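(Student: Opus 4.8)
The plan is to run S\'ark\"ozy's original argument: a Fourier-analytic (circle method) set-up driving a density increment. Suppose for contradiction that $A\subseteq\{1,\dots,N\}$ with $|A|=\delta N$ contains no two distinct elements whose difference is a perfect square. Fix an integer $M$ with $2N<M\le 4N$, work in the group $\Z_M$, and put $Q:=\lfloor\sqrt N\rfloor$, so that the relation $a-a'=n^2$ with $a,a'\in\{1,\dots,N\}$ and $1\le n\le Q$ never wraps around modulo $M$. I would count
\[
\mathcal N:=\sum_{n=1}^{Q}\ \sum_{x\in\Z_M}1_A(x)\,1_A(x-n^2),
\]
which vanishes by hypothesis (the only solutions would have $n=0$). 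Expanding with the discrete Fourier transform on $\Z_M$ and peeling off the frequency $\xi=0$ gives, writing $e(t):=e^{2\pi i t}$ and $S(\xi):=\sum_{n=1}^{Q}e(n^2\xi/M)$,
\[
0=\mathcal N=\frac{\delta^2N^2Q}{M}+\frac1M\sum_{\xi\ne0}|\widehat{1_A}(\xi)|^2\,\overline{S(\xi)} ,
\]
so the main term, of size $\asymp\delta^2N^{3/2}$, must be cancelled by the nonzero frequencies; equivalently $\delta^2N^2Q\ll\sum_{\xi\ne0}|\widehat{1_A}(\xi)|^2\,|S(\xi)|$.

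The technical core is the estimation of the quadratic exponential sum $S(\xi)$. Combining Weyl's inequality with Dirichlet's theorem on rational approximation yields the usual dichotomy: either $|S(\xi)|\ll Q^{1-c}$ for an absolute constant $c>0$ (the minor arcs), or $\xi/M$ lies within $O(1/(qQ))$ of a rational $a/q$ in lowest terms with $q$ small, in which case $|S(\xi)|$ has the Gauss-sum size $\asymp Q/\sqrt q$ and never exceeds $Q$ (the major arcs). By Parseval, $\sum_\xi|\widehat{1_A}(\xi)|^2=M|A|=\delta NM$, so the minor-arc frequencies contribute at most $\ll Q^{1-c}\delta NM=o(\delta^2N^2Q)$ once $N$ is large in terms of $\delta$; hence the major arcs carry essentially all of the main term. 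Using $|S(\xi)|\le Q$ on the major arcs, the total $L^2$-mass of $\widehat{1_A}$ over the union of major arcs is therefore $\gg\delta^2N^2$, and pigeonholing over the $\ll q_0(\delta)^2$ relevant pairs $(a,q)$ --- all with $q\le q_0(\delta)=\mathrm{poly}(1/\delta)$ --- isolates a single rational $a/q$ near which $\widehat{1_A}$ concentrates mass $\gg\delta^{O(1)}N^2$.

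Such a concentration of $\widehat{1_A}$ near $a/q$ is exactly the input to the standard density-increment lemma: it forces $1_A$ to be biased on the residue classes modulo $q$ along a subinterval of $\{1,\dots,N\}$ of length $\gg\delta^{O(1)}\sqrt N$, hence (refining one residue class mod $q$ into its $q$ subclasses mod $q^2$ and pigeonholing) to have density at least $\delta+c\,\delta^{O(1)}$ on an arithmetic progression $P=\{x_0+q^2 j:0\le j<L\}$ with $L\gg\delta^{O(1)}\sqrt N$. The square common difference is what makes the recursion close: rescaling, $A':=\{\,j:x_0+q^2 j\in A\,\}\subseteq\{0,\dots,L-1\}$ has density $\ge\delta+c\delta^{O(1)}$ and still contains no square difference, since $j-j'=k^2$ forces $a-a'=q^2k^2=(qk)^2$. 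Iterating this increment, the density cannot exceed $1$, so after at most $O(\delta^{-O(1)})$ steps the ambient length would drop below any prescribed threshold; choosing $N_0(\delta)$ large enough that this never happens for $N>N_0(\delta)$ gives the contradiction.

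The step I expect to be the main obstacle is the honest execution of the exponential-sum dichotomy and the associated bookkeeping: proving a genuine power saving $|S(\xi)|\ll Q^{1-c}$ on the minor arcs; fixing the width of the major arcs and the denominator cutoff $q_0(\delta)$ so that the pigeonhole and the density-increment lemma both go through; and, in the iteration, verifying that the common difference remains a perfect square and that the length of the working progression is not eroded faster than the density gain accumulates, so that the circle method can be re-run at every stage. The remaining ingredients --- Parseval, pigeonholing, and the rescaling --- are routine.
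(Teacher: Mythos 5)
The paper does not prove Theorem \ref{t1}; it is quoted as a classical result of S\'ark\"ozy (and, in qualitative form, Furstenberg) and cited to \cite{sarkozy1}, \cite{f1}, \cite{f2}, serving only as motivation for the intersective-polynomial framework. There is therefore no in-paper proof to compare against, but your sketch faithfully reconstructs S\'ark\"ozy's original Fourier-analytic density-increment argument: embed $A$ in $\Z_M$ with $M\asymp 2N$, count square differences via Fourier, split the frequency sum by the Weyl/Dirichlet dichotomy for $S(\xi)=\sum_{n\le Q}e(n^2\xi/M)$, push the $L^2$ mass onto a single major arc $a/q$ with $q\le q_0(\delta)$, and derive a density increment on a progression of common difference $q^2$ and length $\gg\delta^{O(1)}\sqrt N$ --- the square common difference being precisely what keeps the hypothesis ``no square differences'' invariant under rescaling. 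Your accounting of the delicate points is also accurate: the honest power saving $|S(\xi)|\ll Q^{1-c}$ off the major arcs, the matching of major-arc width to the progression length so the phase is essentially constant, and the fact that the length drops roughly from $N$ to $\delta^{O(1)}\sqrt N$ per step (so that after $\mathrm{poly}(1/\delta)$ iterations the ambient length decays like $N^{2^{-O(\delta^{-O(1)})}}$, forcing $N_0(\delta)$ to be of tower type). None of this conflicts with the paper; the paper simply takes the theorem (and its modern quantitative refinements, Lucier's in particular) as a black box and transfers it to the primes.

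Two small technical remarks. First, in your displayed identity you should be explicit about the Fourier normalization on $\Z_M$, since whether the factor is $S(\xi)$ or $\overline{S(\xi)}$ and whether the main term is $\delta^2N^2Q/M$ depend on it; as written it is consistent with the unnormalized transform $\widehat f(\xi)=\sum_x f(x)e(-x\xi/M)$, which differs from the expectation-normalized transform used elsewhere in this paper, so if you intend this as a supplement to the paper you should fix one convention. Second, in the density-increment step one must check that the sub-progression $P=\{x_0+q^2j:0\le j<L\}$ really sits inside $\{1,\dots,N\}$ and that the residue $x_0\bmod q^2$ on which $A$ is dense can be realized by a full-length progression; this is the standard ``splitting into complete and incomplete progressions'' issue, routine but easy to elide. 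Otherwise the outline is sound.
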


We have the same conclusion if the set of the squares is replaced by
$\{p+1: p$ prime$\}$ or $\{p-1:p$ prime$\}$. More generally, we say
that a set $H \subset \Z^{+}$ is intersective if $H \cap(A-A) \neq
\emptyset$ for any set $A$ of positive upper density. We say that a
polynomial $h \in \Z[x]$ is intersective if the set $\{h(n): n \in \Z \} \cap (0,
\infty)$ is intersective. Thus S\'{a}rk\"{o}zy's theorem says that the
polynomial $h(n)=n^2$ is intersective.

Kamae and Mend\`{e}s France \cite{km} proved a criterion about
intersective sets. This gives a necessary and sufficient condition
for a polynomial to be intersective:
\begin{theorem}[Kamae-Mend\`{e}s France]
A polynomial $h \in \Z[x]$ is intersective if and only if for every
$d>0$, there exists $n$ such that $P(n) \equiv 0 \pmod{d}$.
\end{theorem}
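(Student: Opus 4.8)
The plan is to prove the two implications separately: the forward direction is immediate, and the reverse one carries all of the content. For the forward direction, suppose $h$ is intersective and fix $d\ge 1$; the set $A=d\Z$ has positive upper density $1/d$ and satisfies $A-A=d\Z$, so intersectivity produces an integer $n$ with $h(n)>0$ and $h(n)\in d\Z$, whence $h(n)\equiv 0\pmod d$.

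For the reverse direction I would first dispose of the (elementary) case of a nonzero constant and then assume $h$ non-constant with positive leading coefficient, after replacing $n$ by $-n$ if needed, so that $h(n)>0$ for all large $n$; write $k=\deg h$. A routine argument moving between $\Z$ and long intervals reduces the statement to the finitary assertion: for every $\delta>0$ there is $N_0(\delta,h)$ such that every $A\subseteq\{1,\dots,N\}$ with $N\ge N_0(\delta,h)$ and $|A|\ge\delta N$ contains $a>a'$ with $a-a'=h(n)$ for some $n\ge 1$. I would establish this by a Fourier-analytic density-increment argument in the style of S\'{a}rk\"{o}zy's treatment of $h(n)=n^2$. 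Writing $f=1_A$ and counting, suitably normalised, the triples $(a,a',n)$ with $a,a'\in A$ and $a-a'=h(n)\in(0,N/2]$, one splits the count over major and minor arcs; on the minor arcs Weyl's inequality for the exponential sums $\sum_n e(h(n)\alpha)$ (with $e(t)=e^{2\pi i t}$) gives an acceptable bound. Either $f$ has no large nonzero Fourier coefficient, in which case the main term $\asymp_h \delta^2 N^{1+1/k}$ dominates --- for a single value $h(n)$ the expected number of pairs at that difference is just $\approx\delta^2 N$, so no local obstruction intervenes at this stage --- and a valid triple exists, yielding $a-a'=h(n)$; or $f$ has a Fourier coefficient of size $\gg_\delta 1$ near a rational number with denominator $q=O_\delta(1)$, and then $A$ has density at least $\delta(1+c)$, with $c>0$ absolute, on an arithmetic progression of common difference $q$ and length $\gg N/q$.

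The hypothesis is what lets the second alternative be iterated. Inside a progression of common difference $q$, the only available differences are those values $h(n)$ divisible by $q$; applying the hypothesis to the modulus $qd$ yields, for each $d$, an $n$ with $qd\mid h(n)$, i.e.\ $h(n)/q\equiv 0\pmod d$, so after rescaling one faces a problem of exactly the same kind. As the density cannot exceed $1$ and each step costs only a $\delta$-dependent factor in $N$, the increment runs at most $O_\delta(1)$ times before the first alternative is forced, which for $N\ge N_0(\delta,h)$ gives the desired configuration. The main technical obstacle is precisely making this induction clean: an intersective $h$ need not have an integer root even though it has a root modulo every integer --- for instance $(x^2-2)(x^2-3)(x^2-6)$ has a root mod $n$ for every $n$ but none in $\Q$ --- so one must iterate on a \emph{pair} consisting of a polynomial and an auxiliary modulus, rather than on a polynomial alone, preserving the congruence condition at every stage. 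This bookkeeping, together with effective Weyl-sum estimates, is what is carried out in Lucier's paper \cite{lucier}; the softer original treatment of Kamae and Mend\`{e}s France \cite{km} instead characterises intersective (and van der Corput) sets via a criterion on nonnegative cosine polynomials $a_0+\sum_j a_j\cos(2\pi m_j t)$, with each $m_j$ a positive value of $h$ and the constant term $a_0$ arbitrarily small, which they verify for the value set of $h$ using equidistribution of $(h(n)\alpha)_n$. Alternatively, through Furstenberg's correspondence principle the claim reduces to showing that $\{h(n):h(n)>0\}$ is a set of measurable recurrence, which one obtains from spectral theory and a polynomial van der Corput lemma by induction on $k$, the congruence condition entering at the base case $h(n)=cn+b$, where it forces the value set to be the set of positive multiples of $c$.
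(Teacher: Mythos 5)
The paper quotes this theorem from \cite{km} without proof, so there is nothing in the source to compare your argument against. Your forward direction is correct and essentially the only argument: $d\Z$ has positive upper density and is its own difference set, so intersectivity produces $n$ with $h(n)>0$ and $d\mid h(n)$.

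The reverse direction is a sketch rather than a complete proof, but the outline is sound and matches the density-increment route taken by Lucier \cite{lucier}, which the paper reuses (Theorem \ref{ul}, Lemma \ref{aux}). You have correctly identified the central obstacle: an intersective polynomial need not have a rational root, so the iteration cannot simply rescale via $h\mapsto h(qx)/q$; it must track a polynomial-plus-modulus pair and pass from $h$ to $h(qx+r)/q$ with $q\mid h(r)$, and the hypothesis of roots modulo every $d$ is precisely what supplies such an $r$ at each scale and guarantees that the rescaled polynomial again has roots modulo every integer. Lucier's auxiliary polynomials $h_d$ and the identity $(h_d)_q=h_{dq}$ in Lemma \ref{aux} are exactly the bookkeeping that makes your ``iterate on a pair'' idea precise. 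One inaccuracy worth noting: the density increment in these arguments is not $\delta\mapsto\delta(1+c)$ with $c$ absolute for general degree; for $k\ge 3$ it is of shape $\delta\mapsto\delta(1+C\theta(\delta))$ with $\theta(\delta)\approx\delta^{k-1}$ (Lemma \ref{iteration}), which is consistent with your later remark that the iteration runs $O_\delta(1)$ times but inconsistent with an absolute $c$. This does not affect the qualitative conclusion. Your alternative pointers --- the cosine-polynomial criterion of \cite{km} and the ergodic reduction via Furstenberg correspondence and polynomial van der Corput, with the hypothesis entering at the linear base case --- are accurate descriptions of the softer proofs.
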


For example, the polynomials $x^2$ and $x^2-1$ are intersective,
while $x^2+1$ is not (think of obstruction modulo 3). A polynomial having an integer root is
certainly intersective, but there are intersective polynomials which
do not have an integer root, e.g. the polynomials $(x^3-19)(x^2+x+1)$, or $(x^2-2)(x^2-3)(x^2-6)$. Berend and Bilu gave in \cite{bb} a procedure to determine whether or not a given polynomial
is intersective.

If $h$ is an intersective polynomial, denote by $D(h,N)$ the maximal size of a subset $A$ of $\{1,\ldots,N \}$ such that we cannot find distinct elements $a,a' \in A$ such that $a-a'=h(n)$ for some integer $n$. Thus necessarily $D(h,N)=o(N)$. It should be mentioned that like Furstenberg's method, Kamae and
Mend\`{e}s France's is qualitative, i.e., does not give any bound on
$D(h,N)$. In the case where $h(n)=n^2$, and more generally $h(n)=n^{k}$, the best bound is due to Pintz-Steiger-Szemer\'{e}di \cite{pss} and Balog-Pelikan-Pintz-Szemer\'{e}di \cite{bpps}. They proved that
$$D(n^{k},N) \ll_{k} N(\log N)^{-(1/4)\log\log\log\log N}$$
for $N$ sufficiently large depending on $k$. Note that this density already includes the primes. For general intersective polynomials, such a quantitative bound was obtained recently by Lucier \cite{lucier}. He proved that, for any intersective polynomial $h$ of degree $k$,
$$D(h,N)\ll_{h} N \frac{(\log \log N)^{\mu/(k-1)}}{(\log N)^{1/(k-1)}}$$
for $N$ sufficiently large depending on $h$, where $\mu=
\left\{
                                                                   \begin{array}{ll}
                                                                     3, & \hbox{if  $k=2$;} \\
                                                                     2, & \hbox{if  $k \geq 3$.}
                                                                   \end{array}
                                                                 \right.$

This density is weaker and does not include the primes. It may well be the case that the correct density includes the primes. However, we don't seek to improve upon Lucier's result, but rather use it, coupled with a ``transference principle'' to deduce a corresponding result for the primes.

Let $\P$ be a subset of $N$. For any subset $\A \subset
\P$, define the upper relative density of $\A$ with respect to $\P$ by
$\overline{d}_{\P}(\A)=\lim_{N \rightarrow
\infty}\frac{\sharp \{n\leq N: n \in \A \} }{\sharp \{n\leq N: n \in
\P \} }$. We will obtain the following:

\begin{theorem} \label{maint1}
For any intersective polynomial $h$, for any subset $\A$ of positive upper
relative density of the primes, there exist distinct elements $p_1,
p_2$ of $\A$ such that $p_1-p_2=h(n)$.
\end{theorem}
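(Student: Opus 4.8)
The plan is to establish Theorem~\ref{maint1} via a transference principle, reducing the problem in the primes to the already-known result of Lucier in the integers. The starting point is that a subset $\A$ of the primes of positive upper relative density can be modeled, after restricting to a suitable residue class and a dyadic range $[N]$, by a function $f \colon \Z_N \to [0, \infty)$ that is pointwise dominated by a pseudorandom measure $\nu$ (the Green--Tao--Chen enveloping sieve weight), has density bounded below, and is ``$W$-tricked'' to remove the bias of the primes modulo small primes. The key external input is the Green--Tao machinery as packaged in \cite{gt-chen}, which gives precisely such a $\nu$ satisfying the linear forms and correlation conditions, together with the fact that $\A$ restricted to an arithmetic progression $a + W\Z$ with $(a,W)=1$ carries positive density.

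First I would set up the $W$-trick: let $W = \prod_{p \le w} p$ for a slowly growing $w = w(N)$, choose a residue $b$ with $(b,W)=1$ on which $\A$ has positive relative density, and define $f(n)$ on $\Z_N$ to be proportional to $\mathbf{1}_{\A}(Wn+b)$ scaled by $\phi(W)/W \cdot \log$, so that $\E f \gg \delta$ while $0 \le f \le \nu$. Next, I would invoke the transference principle (the dense model theorem): since $\nu$ is pseudorandom, $f$ is close in the relevant dual (Gowers-type) norm to a bounded function $g \colon \Z_N \to [0,1]$ with the same mean. The counting operation we care about is the number of solutions to $p_1 - p_2 = h(n)$, which after the $W$-trick becomes $W(n_1 - n_2) = h(m)$ for appropriate $m$; for the polynomial average to be controlled by the Gowers norms one uses that $h$ is intersective so that, after passing to a sub-progression determined by a fixed root of $h$ modulo a highly divisible modulus (this is exactly Lucier's device), $h$ factors as $h(m) = c\, r(m')^{?}$... more precisely one writes $h(z_0 + d y) = d^{?} g(y)$ with $g$ again intersective of the same degree, absorbing $W$ into $d$.

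The main obstacle I expect is twofold. The analytic obstacle is showing that the polynomial configuration $p_1 - p_2 = h(n)$ is ``controlled'' by a Gowers norm of $f - g$ of bounded degree --- i.e.\ a generalized von Neumann theorem for this single nonlinear equation --- so that replacing $f$ by its bounded model $g$ changes the count by $o(N \cdot N^{1/k})$; here the difficulty is that $h$ is nonlinear, so one cannot directly cite the linear generalized von Neumann theorem of \cite{gt-chen}, and instead must run a Weyl-differencing / van der Corput argument against $\nu$, using the correlation condition to handle the fact that $f$ is unbounded. The arithmetic obstacle is the interplay between the $W$-trick modulus $W$ and Lucier's root-extraction modulus: one must choose the auxiliary modulus in Lucier's theorem to be a multiple of $W$ (and of the content of $h$), verify that the rescaled polynomial is still intersective with the same degree, and check that the density of the model $g$ inside the relevant progression is still bounded below by a function of $\delta$ alone, independent of $N$.

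Once these are in place, the argument concludes quickly: apply Lucier's theorem (the bound $D(g, N') \ll_g N' (\log\log N')^{\mu/(k-1)}(\log N')^{-1/(k-1)} = o(N')$) to the dense bounded model $g$ on a progression of length $N' \to \infty$ to find $y_1 \ne y_2$ in the support of $g$ with $y_1 - y_2$ in the value set of the rescaled intersective polynomial; transfer this back to a genuine solution $p_1 - p_2 = h(n)$ with $p_1, p_2 \in \A$ by noting that the weighted count of such configurations against $f \otimes f$ is, up to the $o(\cdot)$ error, the same as against $g \otimes g$, hence positive; and finally observe that the $p_1 = p_2$ diagonal contributes a negligible amount (it forces $h(n)=0$, and either $h$ has no integer root or the finitely many roots contribute $O(N^{\epsilon})$ to the count against $\nu$). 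The Chen prime statement follows verbatim, since \cite{gt-chen} supplies an enveloping sieve majorant for the Chen primes with the same pseudorandomness properties.
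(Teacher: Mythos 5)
Your high-level strategy is the right one and matches the paper: $W$-trick, Green--Tao enveloping sieve as pseudorandom majorant, a transference principle, and Lucier's theorem for intersective polynomials as the dense-set input. But the technical implementation you propose is not the paper's, and the obstacle you flag as the main difficulty is in fact an artifact of that choice.

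You suggest running the Gowers-norm dense model machinery and then proving a ``generalized von Neumann theorem for the single nonlinear equation $p_1-p_2 = h(n)$'' by Weyl differencing against $\nu$. That is not needed and, as you yourself observe, would be delicate. The configuration $a,\ a+h(n)$ is a \emph{two-point} pattern: the counting operator $\sum_{a,d} f(a)f(a+d)S_h(d)$ linearizes completely under the Fourier transform as $N^2\sum_{\xi}\overline{\widehat f(\xi)}\,\widehat f(\xi)\,\widehat{S_h}(\xi)$, so only $U^2$/Fourier information about $f$ is ever used, no matter how high the degree of $h$. The paper accordingly runs the simpler Fourier transference of \cite{gt-chen}: (i) a discrete restriction estimate for the enveloping sieve gives $\|\widehat f\|_q\ll 1$ for some $q<4s_0/(2s_0-1)$; (ii) a Hua-type moment bound for the weighted Weyl sum (Lemma~\ref{w}, with $c(h_W)$ controlled uniformly via Lucier's semidiscriminant bound) gives $\|\widehat{S_{h_W}}\|_{2s_0}\ll_h 1$; (iii) $f$ is split by a Bohr-set smoothing into $f_1+f_2$, where $f_1$ is bounded (so Lucier applies) and $f_2$ has small $\|\widehat{\cdot}\|_\infty$, and the cross terms are killed by H\"older using (i), (ii). No correlation condition, no van der Corput against $\nu$, no higher Gowers norms. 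So the gap in your proposal is a misdiagnosis: you identify a genuine-looking analytic obstacle, propose an approach that runs into it, and do not resolve it, whereas the problem dissolves once you observe that Fourier control suffices.

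Two further points that are glossed over and are actually load-bearing. First, you invoke Lucier in the form ``$D(g,N')=o(N')$,'' i.e.\ the zero-density statement, but transference needs the \emph{quantitative count} $R(A,h)\ge c(h,\delta)|A|^2$ (Theorem~5 of \cite{lucier}), since the structured piece $f_1$ must contribute $\gg N^2$, not merely $>0$, so as to absorb the error from the cross terms. Second, the $W$-trick here is not $n\mapsto Wn+b$: because $h$ need not vanish at $0$, one must pass to Lucier's auxiliary polynomials $h_W(x)=h(r_W+Wx)/\lambda(W)$ and correspondingly map $n\mapsto\lambda(W)n+b$, so that $(\lambda(W)a+b)-(\lambda(W)a'+b)=\lambda(W)h_W(d)=h(Wd+r_W)$. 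You gesture at this (``$h(z_0+dy)=d^{?}g(y)$'') but what is actually needed, and what the paper proves as Theorem~\ref{ul}, is a version of Lucier's count that is \emph{uniform over $W<N^{\kappa_1}$}; this is a genuine modification of Lucier's iteration, not an automatic consequence of $h_W$ being intersective of the same degree, and it relies on the uniform bounds $B(h_W)\ll_h 1$, $c(h_W)\ll_h 1$ from Lemma~\ref{aux}.
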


\begin{remarks} If $h(0)=0$, then this is a very special case of the result of Tao-Ziegler \cite{tao-ziegler}, which says that configurations $a+P_1(d),\ldots, a+P_{k}(d), d \neq 0$ exist in dense subsets of the primes, where $P_{i} \in \Z[x], P_{i}(0)=0$. Their starting point is a uniform version of the Bergelson-Leibman theorem, which says that such configurations exist in dense subsets of the integers. Tao-Ziegler's proof of the uniform version uses a lifting to a multidimensional version of the Bergelson-Leibman theorem and relies on the very fact that each $P_{i}(0)=0$. Therefore, it is not applicable to general intersective polynomials.
\end{remarks}

Following Green and Tao, let us call a prime $p$ a Chen prime if $p+2$ is either a prime or a product $p_1p_2$ of primes with $p_1, p_2>p^{3/11}$. The following result is due to Chen \cite{chen}:

\begin{theorem}[Chen]
Let $N$ be a large integer. The the number of Chen primes in the interval $[1, N]$ is at least $c_{1}N/\log^{2}N$ for some absolute constant $c_1>0$.
\end{theorem}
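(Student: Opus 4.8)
The plan is to establish (this quantitative form of) Chen's theorem by sieve theory: the linear (Rosser--Iwaniec / Jurkat--Richert) sieve, the Bombieri--Vinogradov theorem as the distributional input, Buchstab's identity, and Chen's \emph{switching trick} inside a weighted sieve. Write $\mathcal{A} = \{p + 2 : p \le N \text{ prime}\}$, so $|\mathcal{A}| \sim N/\log N$, and for a prime $q$ put $\mathcal{A}_q = \{a \in \mathcal{A} : q \mid a\}$ and $S(\mathcal{B}, z) = \#\{b \in \mathcal{B} : \gcd(b, \prod_{r < z} r) = 1\}$. The sequence $\mathcal{A}$ obeys a linear sieve (dimension $\kappa = 1$) with density $\omega(q) = q/(q-1)$ for odd $q$ and $\omega(2) = 0$, and crucially it has level of distribution $N^{1/2 - \varepsilon}$: by Bombieri--Vinogradov, $\sum_{d \le N^{1/2-\varepsilon}} \bigl|\,|\mathcal{A}_d| - \tfrac{\omega(d)}{d}|\mathcal{A}|\,\bigr| \ll_A N (\log N)^{-A}$. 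Since $\prod_{q < z}(1 - \omega(q)/q) \asymp 1/\log z$, one has $|\mathcal{A}| \prod_{q < z}(1-\omega(q)/q) \asymp N/\log^2 N$ for $z$ a fixed power of $N$ --- this is the target order.

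A direct application of Jurkat--Richert gives $S(\mathcal{A}, z) \ge \{f(s) - o(1)\}\,|\mathcal{A}|\prod_{q<z}(1 - \omega(q)/q)$ with $s = \tfrac12 \log N / \log z$ and $f$ the linear-sieve lower-bound function ($f(s) = \tfrac{2 e^{\gamma}}{s}\log(s-1)$ for $2 \le s \le 4$, $f(s) = 0$ for $s \le 2$); taking $z = N^{1/4 - \varepsilon}$ makes $f(s) > 0$ and already shows infinitely many primes $p$ with $p + 2 \in P_4$. To reach $P_2$ directly one would need to sift up to $z = N^{1/3 + \varepsilon}$, i.e. level of distribution exceeding $2/3$, which is the (open) Elliott--Halberstam range. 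Chen's device is instead a \emph{weighted} count. Fix a small exponent $\theta$, sift to $z = N^{\theta}$, and attach to each $a \in \mathcal{A}$ with $\gcd(a, \prod_{r<z}r) = 1$ the weight $w(a) = 1 - \tfrac12 \#\{\, q : N^{\theta} \le q < N^{1/3},\ q \mid a \,\}$. A short combinatorial check shows: if $a \le N + 2$, $\gcd(a, \prod_{r<z}r) = 1$ and $w(a) > 0$, then either $a$ is prime or a product of two primes (necessarily each $\ge N^{\theta}$), or else $a$ has exactly the shape $q_1 q_2 q_3$ with $N^{\theta} \le q_1 < N^{1/3} \le q_2 \le q_3$. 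Hence, with $\Omega_1$ the number of such exceptional $a \in \mathcal{A}$,
\[ \#\{\, p \le N : p + 2 \in P_2 \,\} \;\ge\; \Omega - \tfrac12\Omega_1, \qquad \Omega := S(\mathcal{A}, N^{\theta}) - \tfrac12 \sum_{N^{\theta} \le q < N^{1/3}} S(\mathcal{A}_q, N^{\theta}), \]
and it remains to show the right-hand side is $\ge c_1 N/\log^2 N$.

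Each of the three pieces is then estimated by the linear sieve, and the second and third are where Chen's switching trick enters. The term $S(\mathcal{A}, N^{\theta})$ is bounded below by Jurkat--Richert (with $s = 1/(2\theta)$, so $f(s)$ close to $1$ when $\theta$ is small), giving a main term $\asymp N/\log^2 N$; for $\theta$ near $3/11$ one first splits off the range $[N^{\beta}, N^{3/11})$ by Buchstab's identity and sieves at the smaller level $N^{\beta}$ where $f$ is positive --- this, after optimization, is what fixes the exponent $3/11$ in the statement. For $S(\mathcal{A}_q, N^{\theta})$ with $q \in [N^\theta, N^{1/3})$, write $a = q m$: this counts integers $m$, free of prime factors below $N^{\theta}$, with $q m - 2$ prime and $qm \le N + 2$. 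The switching trick reverses the roles of the sifted variable and the extracted prime: one applies an \emph{upper}-bound sieve to $m$ and handles the primality of $qm - 2$ by summing over $q \in [N^\theta, N^{1/3})$, which is admissible because the modulus $q$ times the (small) sieve support $N^{O(\theta)}$ stays below the Bombieri--Vinogradov level $N^{1/2 - \varepsilon}$; this yields $\sum_{q} S(\mathcal{A}_q, N^{\theta}) \le \{c_2 + o(1)\}\, N/\log^2 N$, and a similar switched upper-bound sieve gives $\Omega_1 \le \{c_3 + o(1)\}\, N/\log^2 N$. Choosing $\theta$ (Chen's optimized choice) so that the positive contribution strictly exceeds $\tfrac12 c_2 + \tfrac12 c_3$ leaves $\#\{p \le N : p + 2 \in P_2\} \ge c_1 N/\log^2 N$ with an explicit $c_1 > 0$.

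The main obstacle is this final balancing. With only level of distribution $1/2$, a plain two-sided linear sieve --- lower bound $f$ for $S(\mathcal{A}, z)$ and upper bound $F$ for the subtracted sum --- produces $\Omega - \tfrac12\Omega_1 < 0$, so the proof genuinely requires (i) the switching trick, which converts the inner sieve on $m$ with the unmanageable constraint ``$qm - 2$ prime'' into an outer average over $q$ that Bombieri--Vinogradov absorbs, thereby replacing an $F$ by an $f$ in the decisive place, and (ii) a Buchstab decomposition of $S(\mathcal{A}, N^\theta)$ together with sharp numerical values of the sieve functions (and of the Buchstab function governing ``two prime factors in a dyadic range'') to push the resulting inequality strictly across zero. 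Getting the combinatorics of which prime factors land in $[N^\theta, N^{1/3})$ right --- so that $w(a) \le 0$ on every genuine $P_{\ge 3}$ except the $\Omega_1$-exceptions --- and then verifying the optimized numerics is the technical heart of the argument.
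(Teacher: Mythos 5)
The paper does not prove this theorem; it is imported verbatim as a black box, citing Chen \cite{chen} for the statement and Iwaniec's sieve-methods notes \cite{iwaniec} for a proof, and then used only through the lower bound $\gg N/\log^2 N$ in the density computation of Section~\ref{construction}. So there is no "paper proof" to compare yours against.

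As a reconstruction of the standard sieve-theoretic argument, your outline is faithful: linear (Jurkat--Richert) sieve with density $\omega(q)=q/(q-1)$ for odd $q$, Bombieri--Vinogradov supplying level of distribution $N^{1/2-\varepsilon}$, the Richert--Chen weight $1-\tfrac12\#\{q\in[N^\theta,N^{1/3}):q\mid a\}$, and the switching trick to convert the subtracted terms $\sum_q S(\mathcal{A}_q,N^\theta)$ and $\Omega_1$ into counts where the extracted prime becomes the sieve modulus and Bombieri--Vinogradov still applies. You also correctly identify why a naive two-sided sieve with $F$ and $f$ fails and why the switch to $f$ in the subtracted terms is the decisive gain. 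Two remarks. First, the combinatorial dichotomy you state for survivors with $w(a)>0$ omits the case $a=q_1q_2q_3$ with all three $q_i\ge N^{1/3}$ (no factor in $[N^\theta,N^{1/3})$ at all); this contributes only $O(N^{2/3+o(1)})$ terms and is negligible, but the claim as written is incomplete. Second, the exponent $3/11$ is asserted to come "after optimization" without showing what is being optimized; in the usual presentation it drops out of a specific parameter choice in the Buchstab decomposition combined with a numerical inequality between values of $f$, $F$, and the Buchstab/Mertens-type integrals, and stating those numerics is exactly the nontrivial content that separates a sketch from a proof --- you acknowledge this, so it is not a gap in understanding, only in completeness. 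For the purposes of this paper, where only the order of magnitude $N/\log^2N$ is needed, your sketch captures everything that is relevant.
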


For a proof of Chen's theorem, see \cite{iwaniec}. Using this result as a ``black box'' we can show that the same conclusion holds for dense subsets of the Chen primes:

\begin{theorem} \label{maint2}
For any intersective polynomial $h$, for any subset $\A$ of positive upper
relative density of the Chen primes, there exist distinct elements
$p_1, p_2$ of $\A$ such that $p_1-p_2=h(n)$.
\end{theorem}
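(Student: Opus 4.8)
The plan is to obtain both Theorem~\ref{maint1} and Theorem~\ref{maint2} from a single \emph{transference} argument: one applies Lucier's theorem not to a subset of $\{1,\dots,N\}$ itself, but to a bounded ``dense model'' of the normalised indicator of $\A$, the passage between the two being licensed by a pseudorandom majorant for the ambient set. The two theorems differ only in the input majorant. In each case one needs a measure $\nu$ with $1_{\P}\ll\nu$, with $\P$ occupying a positive proportion of the mass $\sum_{n\le N}\nu(n)$, and satisfying an $L^{q}$-restriction estimate together with the relevant low-complexity linear-forms conditions. For the primes such a $\nu$ is by now classical; for the Chen primes it is precisely the Selberg-sieve majorant whose restriction theory Green and Tao developed in \cite{gt-chen} (the very object with which they located $3$-term progressions in the Chen primes), the positive-proportion statement being then a consequence of Chen's theorem. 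I therefore describe the argument once, for an abstract pair $(\P,\nu)$ of this kind.

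Fix $\A\subseteq\P\cap[1,N]$ with $\sharp\A\ge\delta\,\sharp(\P\cap[1,N])$, and put $k=\deg h$. First perform the $W$-trick: with $w=w(N)\to\infty$ slowly and $W=\prod_{p\le w}p$, the measure $\nu$ is, after affine rescaling, equidistributed over the reduced residues modulo $W$, so some $b$ coprime to $W$ has $\A_{b}:=\{x:Wx+b\in\A\}$ of density $\gg\delta$ inside $\{x:Wx+b\in\P\}$. Writing a hypothetical solution as $p_{i}=Wx_{i}+b$ turns $p_{1}-p_{2}=h(n)$ into $W(x_{1}-x_{2})=h(n)$, so one needs $W\mid h(n)$. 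Here intersectivity enters: by the Kamae--Mend\`{e}s France criterion, and more precisely by the $p$-adic lifting lemma underlying Lucier's work, one may choose $r$ simultaneously lifting a $p$-adic root of $h$ for every $p\mid W$; then $h^{*}(m):=h(Wm+r)/W$ lies in $\Z[m]$, has degree $k$, and is again intersective. The problem becomes: find $x_{1}-x_{2}=h^{*}(m)$ with $x_{1},x_{2}$ in a density-$\gg\delta$ subset of $[N']$, $N'\asymp N/W$, and $m$ ranging over $\asymp(N')^{1/k}$ values. Let $f:[N']\to[0,\infty)$ be the normalised indicator of this subset, so $0\le f\ll\nu$ and $\E f\gg\delta$, and let $\Lambda(F):=\sum_{0<h^{*}(m)\le N'}\sum_{x\in[N']}F(x)F(x+h^{*}(m))$.

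The restriction estimate for $\nu$ (in the Chen case, that of \cite{gt-chen}) now yields a dense-model decomposition $f=g+e$ with $0\le g\ll 1$, $\E g=\E f\gg\delta$, and $e$ negligible in the Fourier norm governing $\Lambda$. Expanding $\Lambda(F)=(N')^{-1}\sum_{\xi}|\widehat F(\xi)|^{2}S(\xi)$ with $S(\xi)=\sum_{m}e(\xi h^{*}(m)/N')$, the term $\xi=0$ is the main term, of size $\asymp(\E F)^{2}N'(N')^{1/k}$; the $W$-trick confines the major arcs of the Weyl sum $S$ to a small neighbourhood of $0$ on which $|\widehat F(\xi)|$ is correspondingly small; and on the minor arcs Weyl's inequality gives $S(\xi)=o((N')^{1/k})$, which together with the restriction bound for $\|\widehat F\|_{q}$ renders the minor-arc part negligible when $F\ll\nu$. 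Hence $\Lambda(f)=\Lambda(g)+o(N'(N')^{1/k})$, and the main term is now a statement purely about integers: Lucier's bound $D(h^{*},N')=o(N')$, fed through a Varnavides-type supersaturation (routine for this two-point configuration), gives $\Lambda(g)\gg_{\delta}N'(N')^{1/k}$. Thus $\Lambda(f)>0$ for $N$ large, producing distinct $p_{1},p_{2}\in\A$ with $p_{1}-p_{2}=h(n)$.

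I expect the main obstacle to lie in the interface between the two halves of the argument. On one side, one must check that the transference for \emph{this} configuration genuinely requires nothing beyond an $L^{q}$-restriction estimate and the low-complexity linear-forms conditions for $\nu$ --- which should hold, since $\{x,x+h^{*}(m)\}$ imposes a single linear relation on $x_{1},x_{2}$ and so is of ``Fourier complexity'' --- and that the Green--Tao enveloping sieve of the Chen primes supplies exactly these. On the other side, one must carry Lucier's $p$-adic bookkeeping (the consistent choice of $r$, the reduction $h\mapsto h^{*}$, the persistence of intersectivity and of the degree) through the $W$-trick in a manner compatible with the rescaling of $\nu$. The remaining ingredients --- Weyl's inequality for $\sum_{m}e(\alpha h^{*}(m))$, the Varnavides averaging, and the equidistribution of the Chen primes modulo $W$ furnished by Chen's theorem --- are standard.
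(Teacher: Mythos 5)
Your high-level plan matches the paper's: a transference argument via a pseudorandom majorant, with the Green--Tao enveloping sieve from \cite{gt-chen} supplying that majorant for the Chen primes (applied to the two-linear-form product $F=(\lambda(W)n+b)(\lambda(W)n+b+2)$), combined with the $W$-trick implemented through Lucier's auxiliary polynomials $h_{W}$. Indeed, the paper's entire proof of Theorem~\ref{maint2} is the observation that the Chen-prime case follows from the proof of Theorem~\ref{maint1} by changing the majorant and the normalization of $f$. Where you diverge technically is in the integer-side input and the Fourier estimates. The paper does not need a Varnavides averaging step: Lucier's Theorem~5 is already a supersaturated count, $R(A,h)\ge c(h,\delta)|A|^{2}$, and the paper invokes it directly (as Proposition~\ref{s}). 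And instead of a pointwise Weyl bound on the minor arcs, the paper uses a Hua-type $L^{2s}$ moment estimate (Lemma~\ref{w}, due to Li--Pan) on the $h'$-weighted exponential sum $\widehat{S_{h}}$, fed into a three-slot H\"older inequality against the Green--Tao dense-model decomposition. These are two workable ways of organizing the same estimate, but the moment bound is what the paper actually uses and it sidesteps the need to delineate arcs.

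The genuine gap, which you flag as a possible obstacle but do not resolve, is uniformity in $W$. Starting the density-increment iteration at $h_{W}$ rather than at $h$ means the polynomial becomes $h_{Wd}$ at later stages, and one must check that the hypotheses of the increment lemma (and the resulting constant $c(h,\delta)$) survive uniformly as $W\to\infty$. The paper devotes Section~3 to exactly this: Theorem~\ref{ul} shows the same constant works for all $W<N^{\kappa_{1}}$, and the proof hinges on Lemma~\ref{aux}, which bounds the content $c(h_{W})$ and the quantity $B(h_{W})$ uniformly in $W$ via the semidiscriminant, together with keeping $d_{l}\le N_{l}^{\rho/4k^{2}}$ through the iteration. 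The same uniformity is needed for the restriction estimate on $\widehat{S_{h_{W}}}$ (Corollary~\ref{c1}), and again it rests on the uniform content bound. Without these two uniformity statements, the constants in both halves of your argument could degenerate with $W$, and the final positivity conclusion would not close. If you supply them, your argument becomes essentially the paper's.
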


The idea of transferring results on dense subsets of the integers
to the primes originates with Green \cite{green}, in which he proved
an analog of Roth's theorem for the primes. Later on, other
transference principles have been devised by Green and Tao in
\cite{gt-chen} in which they proved the analog of Roth's theorem in
the Chen primes, and in \cite{gt-primes} in which they proved that
the primes contains arbitrarily long arithmetic progressions. These machineries have been used in a number of settings, such as random sets (\cite{tao-vu}, \cite{lh}) or the ring of polynomials over a finite field (\cite{ls}). We opt for the transference principle in \cite{gt-chen} since it is relatively simpler and more general than that in \cite{green}.
In a similar spirit, Li and Pan \cite{lp} proved that if $Q$ is a polynomial in $\Z[x]$ such that $Q(1)=0$, then inside any dense subset of the primes, we can find two distinct elements whose difference is of the form $Q(p)$ where $p$ is a prime number. It would be interesting to determine the class of all the polynomials $Q$ such that the same conclusion holds (other than those vanishing at 1).

\textbf{Acknowlegdments.} I would like to thank my advisor Terence Tao for helpful discussions during the preparation of this paper. I would like also to thank Craig Spencer for pointing me to Lucier's paper \cite{lucier} and for helpful comments on this paper. Part of this research was done when I was visiting the Mathematical Sciences Research Institute, Berkeley, and I am grateful to their hospitality.

\section{Notation and Preliminaries}
For two quantities $A,B$, we write $A=O(B)$, or $A\ll B$, or $B \gg
A$ if there is an absolute positive constant $C$ such that $|A| \leq CB$. If
$A$ and $B$ are functions of the same variable $x$, we write $A=o_{x
\rightarrow \infty}(B)$ if $A/B$ tends to 0 as $x$ tends to
infinity. If the constant $C$, (respectively, the rate of
convergence of $A/B$) depends on a parameter, e.g. $m$, then we
write $A=O_{m}(B)$ (respectively, $A=o_{m}(B)$). Quantities denoted
by the $C,c$ will stand for constants, which may change from line to
line. We denote by $\ZN$ the cyclic group on $N$ elements. This is
not to be confused with the ring of $p$-adic integers, which we also
denote by $\Z_{p}$, since we will make use of the latter very briefly
(in the introduction of auxiliary polynomials).

\subsection{Fourier analysis on $\ZN$} We will work primarily in a group $\ZN$ where $N$ is a large number. For a function $f:\ZN \rightarrow \C$ let us define its Fourier transform
by $\widehat{f}(\xi)=\E_{x \in \ZN} f(x) e_{N}(x \xi)$, where
$e_{N}(t)=e^{\frac{2 \pi i t}{N} }$, and $\E$ is the expectation. If
$f,g:\ZN \rightarrow \C$ are two functions, then $f*g$, the
convolution of $f$ and $g$, is defined by $f*g(x)=\E_{y \in
\ZN}f(y)g(x-y)$.
We also define the $l^{p}$-norm of $f$ to be $\|f\|_{p}= \left( \sum_{\xi \in \ZN} |f(\xi)|^{p} \right)^{1/p}$. We will often use a subset of $\ZN$ to denote its characteristic function.

We recall the basic properties of the Fourier transform:
\begin{itemize}
\item (Fourier inversion formula) $f(x)=\sum_{\xi \in \ZN} \widehat{f}(x) e_{N}(-x \xi)$
\item (Plancherel) $\sum_{\xi \in \ZN} \widehat{f}(\xi) \overline{\widehat{g}(\xi)}=\E_{x \in \ZN} f(x) \overline{g(x)}$
\item (Parseval) $\|\widehat{f}\|_{2}^2=\sum_{\xi \in \ZN}|\widehat{f}(\xi)|^2=\E_{x \in
\ZN}|f(x)|^2$
\item (Fourier transform of a convolution) $\widehat{f*g}(\xi)=\widehat{f}(\xi)\widehat{g}(\xi) \textrm{ for every } \xi
\in \ZN$
\end{itemize}

\subsection{Intersective polynomials}Let $h(x)=a_{k}x^{k}+\cdots+a_{0}$ be a fixed intersective polynomial of degree $k \geq 2$ throughout the paper. By a change of variables if need be, we may assume that $h$ and $h'$ are positive and increasing for $x \geq 0$.

If $f(x)=b_{k}x^{k}+\cdots+b_{0}$, let us denote by $b(f)=b_{k}$ and $B(f)=\frac{2}{|b_{k}|}(|b_{k-1}|+\cdots+|b_0|)$. Then if $b(f)>0$, we have $B(f') \leq B(f)$ and
\begin{equation}\label{b}
\frac{1}{2}b(f)x^{k} \leq f(x) \leq \frac{3}{2}b(f)x^{k}
\end{equation}
for $x \geq B(f)$ (\cite[Lemma 3]{lucier}).

If $f$ has integer coefficients, let us denote by $c(f)=\gcd(b_{k},\ldots,b_1)$, the content of $f$.

Suppose $f=a(x-\eta_1)^{e_1}\cdots(x-\eta_{r})^{e_{r}}$ in some splitting field. Let us denote by $\Delta(f)=a^{2k-2}\prod_{i \neq j}(\eta_{i}-\eta_{j})^{e_{i}e_{j}}$, the semidiscriminant of $f$. The semidiscriminant was first introduced by Chudnovsky \cite{chudnovsky}. When $f$ is separable then the semidiscriminant is simly the discriminant. It can be shown that $\Delta(f)$ is always a non-zero integer when $f \in \Z[x]$.

In order for the transference principle to work, we need not only one solution to $a-a'=h(n)$, but ``many'' (i.e., of the ``right'' order) of them. This is already established by Lucier. Another issue is that we will not be working directly with the primes, but rather affine images of primes (in congruences classes modulo $W$, where $W$ is a product of small primes meant to absorb obstruction at these primes). This technique is called the ``$W$-trick'' and is quite common in situations in arithmetic combinatorics where we want to transfer results on dense subsets of the integers to the primes \cite{green}, \cite{gt-chen}, \cite{gt-primes}, \cite{tao-ziegler}.

Thus instead of a single polynomial $h$, we will work with a family of polynomials $h_{W}$ parametrized by $W$, which are compositions of $h$ with affine maps. Our bounds need to be independent of $W$. As mentioned earlier, Tao-Ziegler's proof of the uniform version of the Bergelson-Leibman theorem does not apply to general intersective polynomials. Fortunately, the auxiliary polynomials introduced by Lucier serve well our purposes.

Note that the condition that $h$ has roots every modulo is equivalent to saying that $h$ has a root in $\Z_{p}$ for every prime $p$, where $\Z_{p}$ is the ring of $p$-adic integers. For each $p$ let us fix a root $z_{p} \in \Z_{p}$ of $h$. If $m$ is the multiplicity of $z_{p}$ as a root of $h$ then we define $\lambda(p)=p^{m}$. We can then extend $\lambda$ to a completely multiplicative arithmetic function on $\N$. It is easy to see that for every $d$, $d | \lambda(d) |d^{k}$.

Suppose $d=p_1^{\alpha_1}\cdots p_{s}^{\alpha_s}$ is the prime factorization of $d$. By the Chinese remainder theorem, let $r_{d}$ be the unique integer satisfying $-d < r_d \leq 0$ and $r_{d} \equiv z_{p} \pmod{ p_{i}^{\alpha_{i}}\Z_{p_{i}}}$ for every $i=1, \ldots, s$.

For any positive integer $d$ we define the polynomial $h_{d}(x)=\frac{h(r_{d}+dx)}{\lambda(d)}$. The properties of $h_{d}$, proved in \cite{lucier}, are summarized in the following lemma:

\begin{lemma} \label{aux}
\begin{enumerate}
    \item For every $d$, $h_{d}$ is a polynomial with integer coefficients and degree $k$. Furthermore, $h_{d}$ is also intersective.
    \item The polynomials $h(d), h'(d), h''(d)$ are positive and increasing for $x \ge 1$.
    \item For every $d, q>0$ then $(h_{d})_{q}=h_{dq}$.
    \item $b(h_{d}) \leq b(h_{d}) \leq d^{k-1}b(h)$.
    \item $B(h_{d}) \leq 2^{k-1}k(B(h)+2)$.
    \item $c(h_{d}) \leq | \Delta(h) |^{\frac{k-1}{2}} c(h)$, where $\Delta(h)$ is the semidiscriminant of $h$.
\end{enumerate}
\end{lemma}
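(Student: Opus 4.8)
The plan is to unwind the $p$-adic construction behind $h_{d}$ and verify the six assertions one prime at a time; all of this is carried out in \cite{lucier}, so I describe the mechanism rather than chase every constant. The two elementary facts I would keep at hand are the divisibilities $d \mid \lambda(d) \mid d^{k}$ (from complete multiplicativity of $\lambda$ together with $\lambda(p)=p^{m}$, $m\le k$ being the multiplicity of $z_{p}$) and the location $-d < r_{d} \le 0$ of the representative, so that $r_{d}+d \ge 1$.

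For (1) I would fix $d = p_{1}^{\alpha_{1}}\cdots p_{s}^{\alpha_{s}}$ and work in each $\Z_{p_{i}}[y]$: since $z_{p_{i}}$ is a root of $h$ of multiplicity $m_{i}$, one may write $h(y) = (y-z_{p_{i}})^{m_{i}}h_{i}^{*}(y)$ with $h_{i}^{*}\in\Z_{p_{i}}[y]$, and because $r_{d}\equiv z_{p_{i}} \pmod{p_{i}^{\alpha_{i}}\Z_{p_{i}}}$ and $p_{i}^{\alpha_{i}}\| d$, putting $y=r_{d}+dx$ makes $y-z_{p_{i}}$ divisible by $p_{i}^{\alpha_{i}}$ in $\Z_{p_{i}}[x]$, whence $h(r_{d}+dx)$ is divisible there by $p_{i}^{m_{i}\alpha_{i}}$. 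Since $v_{p}(\lambda(d))=0$ for $p\nmid d$ and $h(r_{d}+dx)\in\Z[x]$, this yields $\lambda(d)\mid h(r_{d}+dx)$ in $\Z[x]$, i.e.\ $h_{d}\in\Z[x]$; its leading coefficient $a_{k}d^{k}/\lambda(d)\neq 0$ gives degree $k$. Intersectivity of $h_{d}$ is then immediate: for a prime $q$ with $q^{\beta}\| d$ one has $d\Z_{q}=q^{\beta}\Z_{q}$ and $r_{d}\equiv z_{q}\pmod{q^{\beta}\Z_{q}}$, so $(z_{q}-r_{d})/d\in\Z_{q}$ is a root of $h_{d}$. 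Assertion (2) follows from the monotonicity normalization of Section 2 and the chain rule, since $h_{d}^{(j)}(x)=d^{j}\lambda(d)^{-1}h^{(j)}(r_{d}+dx)$ and $r_{d}+dx\ge r_{d}+d\ge 1$ for $x\ge 1$.

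For (3) the point is that the data attached to $h_{d}$ is consistent with that of $h$: the natural choice of root of $h_{d}$ at $p$ is $(z_{p}-r_{d})/d$, which has the same multiplicity as $z_{p}$ (an affine substitution preserves multiplicities), so the function playing the role of $\lambda$ for $h_{d}$ is again $\lambda$ itself; and a short check of ranges and congruences shows that the representative $\rho$ of $h_{d}$ at modulus $q$ satisfies $r_{d}+d\rho=r_{dq}$. Then $(h_{d})_{q}(x)=h_{d}(\rho+qx)/\lambda(q)=h(r_{d}+d(\rho+qx))/(\lambda(d)\lambda(q))=h(r_{dq}+dqx)/\lambda(dq)=h_{dq}(x)$ by complete multiplicativity. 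Assertions (4) and (5) are elementary once one expands $h(r_{d}+dx)=\sum_{j}a_{j}(r_{d}+dx)^{j}$: the leading coefficient of $h_{d}$ is $a_{k}d^{k}/\lambda(d)$, so $d\mid\lambda(d)\mid d^{k}$ gives $b(h)\le b(h_{d})\le d^{k-1}b(h)$ at once, while bounding the coefficient of $x^{i}$ by $\lambda(d)^{-1}\sum_{j\ge i}|a_{j}|\binom{j}{i}|r_{d}|^{j-i}d^{i}\le (d^{k}/\lambda(d))\sum_{j}|a_{j}|\binom{j}{i}$ (using $|r_{d}|<d$), dividing by $b(h_{d})$, and summing over $0\le i<k$ yields $B(h_{d})\le 2^{k-1}B(h)+2^{k+1}\le 2^{k-1}k(B(h)+2)$.

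The one place where real work, and the semidiscriminant, genuinely enter is (6). After factoring out $c(h)$ the task is to bound, for each prime $p$, the $p$-adic valuation of the content of $h_{d}/c(h)$ in terms of $v_{p}(\Delta(h))$ \emph{uniformly in $d$}. Following \cite{lucier}, I would relate the content of $h_{d}$ at $p$ to the valuation of the cofactor $h_{i}^{*}(z_{p})$ occurring in the factorization above, and show that this cofactor divides a fixed power of $\Delta(h)$ (a small $p$-adic size of $h_{i}^{*}(z_{p})$ forces another root to be $p$-adically close to $z_{p}$, which the semidiscriminant detects); the uniformity in $d$ comes from the fact that once the $p$-adic precision of $r_{d}$ exceeds the level at which $z_{p}$ is resolved, raising it further leaves the content unchanged. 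I expect this estimate to be the main obstacle, and would reproduce Lucier's computation for it rather than look for a shortcut.
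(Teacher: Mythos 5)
The paper does not actually prove Lemma~\ref{aux}: it only states the six properties and cites Lucier \cite{lucier} for the proofs, so there is no in-paper argument to compare yours against. Your reconstruction is nevertheless a faithful unpacking of the $p$-adic construction behind $h_{d}$. The arguments you give for (1)--(3) are correct: the divisibility $\lambda(d)\mid h(r_{d}+dx)$ in $\Z[x]$ via prime-by-prime factorization $h(y)=(y-z_{p})^{m}h^{*}(y)$ in $\Z_{p}[y]$; intersectivity via the root $(z_{q}-r_{d})/d\in\Z_{q}$ (which handles both $q\mid d$ and $q\nmid d$, since in the latter case $d$ is a unit); and for (3) the identification $\mu=\lambda$ together with the range/congruence check $r_{d}+d\rho=r_{dq}$, where I confirm the endpoint arithmetic $-dq < r_{d}+d\rho\le 0$ works because $r_{d}\ge -d+1$ and $\rho\ge -q+1$. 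For (4) and (5) the coefficient-by-coefficient estimate is right; your derived bound $B(h_{d})\le 2^{k-1}B(h)+2^{k+1}$ is actually a hair sharper than the stated $2^{k-1}k(B(h)+2)$ and implies it for $k\ge 2$. (You also correctly read through two typos in the paper's statement, namely $h_d,h_d',h_d''$ in item~(2) and $b(h)\le b(h_d)$ in item~(4).)

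For (6) you explicitly defer to Lucier's computation, which is appropriate; this is indeed the only item where the semidiscriminant enters and where the work is genuinely nontrivial. Your heuristic (the cofactor $h^{*}(z_{p})$ divides a fixed power of $\Delta(h)$, and the content of $h_{d}$ stabilizes once the $p$-adic precision of $r_{d}$ saturates) is the right intuition, but you are correct that one should reproduce Lucier's argument rather than improvise. One small point worth flagging: for item~(2) to hold as stated you need $h,h',h'',h'''$ all positive for positive arguments, which is slightly more than the paper's stated normalization that $h,h'$ are positive and increasing on $[0,\infty)$; this is an imprecision in the paper, not in your proposal, and Lucier's actual normalization handles it.
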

\begin{remark}The last property is by far the most important, since our bounds on exponential sums involving $h_{d}$ will depend on $c(h_{d})$. The last two properties ensure that $B(h_{d})$ and $c(h_{d})$ can be bounded uniformly, no matter what $d$ is. The only quantity that can grow is $b(h_{d})$. We will see that this quantity is also within control if we keep $d$ smaller than a small power of $N$.
\end{remark}

\section{A uniform version of Lucier's theorem}
Let us first recall Lucier's main result in \cite{lucier}. Let $\delta>0$ and $A$ be a subset of $\{1, \ldots, N \}$ such that $|A|=\delta N$. For every $n$ let $r(h,n,A)$ be the number of couples $(a, a')$ of elements in $A$ such that $a-a'=h(n)$. Let $R(A,h)=\sum_{n \geq 0} h'(n)r(h,n,A)$.

\begin{theorem}[Theorem 5, \cite{lucier}] \label{lucier}
There is a constant $C(h,\delta)$ depending on $h$ and $\delta$ alone such that whenever $N$ is sufficiently large in terms of $h$ and $\delta$, the following estimate holds:
$$R(A,h) \geq c(h, \delta) |A|^2$$
\end{theorem}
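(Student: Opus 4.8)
To prove Theorem~\ref{lucier} the plan is to run the Fourier-analytic density-increment method of S\'{a}rk\"{o}zy, in the polynomial form of Pintz--Steiger--Szemer\'{e}di and Lucier, feeding the iteration with the auxiliary polynomials $h_{d}$ of Lemma~\ref{aux}. Fix $\delta>0$ and $A\subset\{1,\ldots,N\}$ with $|A|=\delta N$, let $M$ be maximal with $h(M)\le N$ (so $M\asymp(N/b(h))^{1/k}$ and $\sum_{n\le M}h'(n)\asymp h(M)\asymp N$ by \eqref{b}), work in $\Z_{N'}$ with $N'\asymp N$ large enough to avoid wrap-around, embed $A$, and set $f=1_{A}$ and $S(\xi)=\sum_{n\le M}h'(n)e_{N'}(-h(n)\xi)$. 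Expanding the indicator of $a-a'=h(n)$ into additive characters yields
\[
R(A,h)=\frac{1}{N'}\sum_{\xi\in\Z_{N'}}|\widehat{f}(\xi)|^{2}\,S(\xi),
\]
whose $\xi=0$ term equals $R_{0}:=\frac{1}{N'}|A|^{2}\sum_{n\le M}h'(n)\asymp|A|^{2}$. This is the main term, and it is already of the size the theorem asserts; it is precisely the weight $h'(n)$ in the definition of $R$ that makes it $\asymp|A|^{2}$ rather than smaller by a power of $N$. Everything now reduces to showing that $\frac{1}{N'}\sum_{\xi\ne 0}|\widehat{f}(\xi)|^{2}S(\xi)$ does not overwhelm $R_{0}$.

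Second, I would split the $\xi\ne 0$ range into minor and major arcs relative to a threshold $Q=N^{c_{0}}$ with $c_{0}=c_{0}(k)$ small. On the minor arcs Weyl's inequality (or Hua's) for $h$ --- applied to $h_{d}$ at later stages of the iteration --- gives $|S(\xi)|=o(N)$, with the saving depending only on $k$, $b(h_{d})$ and the content $c(h_{d})$; combined with $\frac{1}{N'}\sum_{\xi}|\widehat{f}(\xi)|^{2}=|A|$ (Parseval) this bounds the minor-arc contribution by $o(N|A|)=o(|A|^{2})$ once $N$ is large in terms of $h$ and $\delta$. On the major arcs one has, near each $a/q$ with $q\le Q$, an asymptotic for $S(a/q+\beta)$ of the shape (a Ramanujan-type sum reflecting the roots of $h$ modulo $q$) times (an Archimedean factor of size $\asymp N$ for $|\beta|\ll 1/N$); here the intersectivity of $h$ is exactly what prevents the local factors from vanishing, so the major arcs genuinely carry the mass, and without this hypothesis a residue class avoiding the values of $h$ would give a counterexample. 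I would then argue dichotomously: either $\frac{1}{N'}\sum_{\xi\ne 0,\ \mathrm{major}}|\widehat{f}(\xi)|^{2}|S(\xi)|\le\tfrac12 R_{0}$, in which case $R(A,h)\ge\tfrac12 R_{0}\gg|A|^{2}$ and we are done; or it exceeds $\tfrac12 R_{0}$, and then a pigeonhole over $q\le Q$ produces a modulus $q$ such that $A$ has density at least $\delta(1+c_{1})$, $c_{1}>0$ absolute, on some residue class $r\pmod q$ intersected with an interval of length $\gg N^{1-c_{0}}$.

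The step I expect to be the main obstacle is the third one: converting this density increment into an iteration on the \emph{polynomial} rather than on a linear pattern. Following Lucier, when $A$ gains density on the class $r\pmod q$ one writes the relevant $n$ as $n=r_{q}+qm$ and uses the defining identity $h(r_{q}+qm)=\lambda(q)\,h_{q}(m)$, together with $h'(r_{q}+qm)=\tfrac{\lambda(q)}{q}h_{q}'(m)$, to rewrite the restricted counting problem for $h$ --- after rescaling the ambient interval --- as the problem $R(A',h_{q})$ for a set $A'$ in an interval of length $N_{1}\gg N^{1-c_{0}}$ with $|A'|\ge\delta(1+c_{1})N_{1}$, together with an inequality of the form $R(A',h_{q})\le C\,R(A,h)\,|A'|^{2}/|A|^{2}$; pinning down this rescaling and comparison exactly --- keeping careful track of $\lambda(q)$, of the $p$-adic roots $z_{p}$, and of the two weightings $h'$ and $h_{q}'$ --- is the delicate part. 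Because $(h_{d})_{q}=h_{dq}$ (Lemma~\ref{aux}(3)), repeated applications never leave the family $\{h_{d}\}$, so the whole argument repeats verbatim, and because $B(h_{d})$ and $c(h_{d})$ stay bounded (Lemma~\ref{aux}(5),(6)) while $b(h_{d})\le d^{k-1}b(h)\le N^{o_{\delta}(1)}$ stays under control, none of the constants in the minor-arc and major-arc estimates degrade. Since the density strictly increases by a factor $1+c_{1}$ at each round it can do so only $t=O_{\delta}(1)$ times before exceeding $1$, so the iteration must terminate in the ``main term'' case; the accumulated modulus is $d\le Q^{t}=N^{o_{\delta}(1)}$, so taking $c_{0}$ small in terms of $\delta$ keeps every $N_{j}\ge N^{1/2}$, and the whole construction is legitimate for $N>N_{0}(h,\delta)$. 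Unwinding the $O_{\delta}(1)$ comparisons gives $R(A,h)\ge c(h,\delta)|A|^{2}$ with a positive constant depending only on $h$ and $\delta$ --- one that, because of the accumulated losses, is presumably far below the truth, which the probabilistic heuristic suggests is $\asymp|A|^{2}$.
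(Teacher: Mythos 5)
First, a point of comparison: the paper does not prove this statement at all --- it is quoted verbatim from Lucier and used as a black box, and the only fragment of its proof reproduced in the paper is the iteration step (Lemma~\ref{iteration}), which is recalled in order to establish the uniform version, Theorem~\ref{ul}. Your outline does match the architecture of Lucier's actual argument: the orthogonality expansion of $R(A,h)$ in which the weight $h'(n)$ makes the zero-frequency term $\asymp|A|^2$, a major/minor arc dichotomy for the weighted Weyl sum, an $L^2$ density increment, and the passage from $h$ to $h_q$ via $h(r_q+qm)=\lambda(q)h_q(m)$ so that the iteration never leaves the family $\{h_d\}$ by Lemma~\ref{aux}(3). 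So the roadmap is the right one.

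As a proof, however, the proposal has genuine gaps. The largest is the one you flag yourself: the rescaling/comparison step --- passing from a density increment of $A$ on a progression to an inequality between $R(A,h_d)$ and $R(A',h_{dq})$ for a rescaled set $A'$, with all the $\lambda(q)$ and $h'$ versus $h_q'$ bookkeeping --- is precisely the content of Lucier's Lemma 31 (Lemma~\ref{iteration} here) and is the technical heart of the theorem; announcing it as ``the delicate part'' does not discharge it. Second, your claimed increment $\delta\mapsto\delta(1+c_1)$ with $c_1$ absolute is not what the method delivers: the local factors at $a/q$ decay in $q$, and the pigeonholing over $q\le Q$ and over the frequencies carrying the mass each cost powers of $\delta$, so the true increment is $\delta(1+C_1\theta(\delta))$ with $\theta(\delta)\asymp\delta^{k-1}$ for $k\ge 3$. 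Hence the iteration runs for $Z\asymp\delta^{-(k-1)}(\log 2\delta^{-1})^{\mu-1}$ steps rather than $O(\log(1/\delta))$, which is exactly why $c(h,\delta)$ has the shape $\exp(-c_1\delta^{-(k-1)}\log^{\mu}(2/\delta))$ recorded after the theorem; with the weaker increment one must actually verify that the constraints $d_i\le C_3\delta_i^{-k}d_{i-1}$, $N_i\ge C_2\delta_{i-1}^{2k^2}N_{i-1}$, $d_l\le N_l^{\varrho/4k^2}$ remain satisfiable throughout all $Z$ steps, and that verification is absent. Third, the minor-arc bound for $\sum_n h_d'(n)e(\alpha h_d(n))$, uniform in $d$ via the control of $c(h_d)$, $B(h_d)$ and $b(h_d)$ from Lemma~\ref{aux}, is asserted rather than proved. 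In short: a correct skeleton of Lucier's proof, but not a proof.
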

Actually Lucier obtained the following estimate for $c(h, \delta)$:
$$c(h,\delta)= \exp \left( -c_1 \delta^{-(k-1)}\log^{\mu} \left( \frac{2}{\delta} \right) \right)$$
which is valid for $\delta \geq c_2 \frac{(\log \log N)^{\mu/(k-1)}}{\log N^{1/(k-1)}}$
where $c_1, c_2$ are constants depending on $h$ alone, and $\mu=
\left\{
                                                                   \begin{array}{ll}
                                                                     3, & \hbox{if  $k=2$;} \\
                                                                     2, & \hbox{if  $k \geq 3$.}
                                                                   \end{array}
                                                                 \right.$
As mentioned earlier, we need to work with the family $(h_{W})$ rather than with $h$ alone. The following gives a uniform version of Theorem \ref{lucier}:

\begin{theorem} \label{ul}
There is a constant $\kappa_1=\kappa_1(k)$ depending on $k$ alone, and a constant $C(h,\delta)$ depending on $h$ and $\delta$ alone such that whenever $N$ is sufficiently large in terms of $h$, the following estimate holds:
$$R(A,h_{W}) \geq C(h, \delta) |A|^2$$
for every $W<N^{\kappa_1}$, where the constant $C(h, \delta)$ is the same as in Theorem \ref{lucier} (but the range of validity of $N$ may be slightly different).
\end{theorem}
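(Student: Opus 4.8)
The plan is to revisit Lucier's proof of Theorem \ref{lucier}, run it verbatim with $h$ replaced by the polynomial $h_W$, keep track of how every constant depends on the polynomial, and then use Lemma \ref{aux} to make all of these dependencies uniform in $W$. Recall that Lucier's argument is a Fourier-analytic density-increment in the spirit of S\'{a}rk\"{o}zy: one writes $R(A,g)$ as a main term plus error terms supported on minor arcs, estimates those errors by Weyl-type bounds for the exponential sums $\sum_{n}e_{N}(g(n)\xi)$, controls the main term by a singular-series computation, and if the error is not negligible one passes to a subprogression on which $A$ has substantially larger density and on which the relevant polynomial becomes an auxiliary polynomial $g_d$; one then iterates. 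For $g=h_W$, Lemma \ref{aux}(3) gives $(h_W)_d=h_{Wd}$, so the whole iteration runs through the family $\{h_{Wd}:d\ge 1\}$.

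The crucial observation is that the two arithmetic quantities governing Lucier's estimates — the content $c(h_{Wd})$, which controls the minor-arc (Weyl) bounds and the singular series, and the quantity $B(h_{Wd})$, which governs the main term through the size bound \eqref{b} — are, by Lemma \ref{aux}(5)--(6),
\[
c(h_{Wd})\le|\Delta(h)|^{\frac{k-1}{2}}c(h),\qquad B(h_{Wd})\le 2^{k-1}k\,(B(h)+2),
\]
both bounded by quantities depending only on $h$, uniformly in $W$ and in $d$. Consequently every constant produced by Lucier's argument that depended on the underlying polynomial only through $k$, $c(\cdot)$ and $B(\cdot)$ — in particular the number of density-increment steps, which depends only on $\delta$, $k$ and the content, hence only on $\delta$ and $h$ — may be taken to depend on $h$ and $\delta$ alone. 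In particular Lucier's formula for $c_1$ may be evaluated with $c(h)$, $B(h)$, $|\Delta(h)|$ in place of the a priori $W$-dependent data of $h_W$, yielding a constant $C(h,\delta)=\exp\left(-c_1(h)\delta^{-(k-1)}\log^{\mu}(2/\delta)\right)$ of exactly the shape of $c(h,\delta)$ in Theorem \ref{lucier}, but now valid for $h_W$ for all $W$.

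The one quantity that genuinely grows with $W$ is the leading behaviour: iterating Lemma \ref{aux}(4) gives $b(h_{Wd})\le(Wd)^{k-1}b(h)$. But $b(\cdot)$ enters Lucier's argument only through the lengths of the sums in $n$, since the couples counted by $R(A,h_{Wd})$ satisfy $h_{Wd}(n)\lesssim N$, i.e. $n\lesssim(N/b(h_{Wd}))^{1/k}$. The iteration terminates after $O_{h,\delta}(1)$ steps, and the moduli of the subprogressions passed to — hence the indices $d$ that occur — are of size $N^{o(1)}$ (indeed bounded in terms of $h$ and $\delta$ alone). Choosing $\kappa_1=\kappa_1(k)$ with $\kappa_1(k-1)\le\tfrac12$, say $\kappa_1=\tfrac{1}{2(k-1)}$, we get $(Wd)^{k-1}b(h)\le N^{2/3}$ for all $W<N^{\kappa_1}$ once $N$ is large in terms of $h$ and $\delta$; the relevant ranges of $n$ then still have length $\gg N^{1/(3k)}$, and all of Lucier's error terms, which beat the main term by powers of $\log N$, remain negligible. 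The net effect of $W$ is thus only to raise the threshold for ``$N$ sufficiently large'', a threshold which nevertheless depends on $h$ and $\delta$ alone and not on $W$. Combining the three paragraphs gives $R(A,h_W)\ge C(h,\delta)|A|^2$ for every $W<N^{\kappa_1}$.

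I expect the main obstacle to be precisely the bookkeeping underlying the third paragraph: one must check, essentially line by line in Lucier's proof, that $b(\cdot)$ never enters except in fixing the lengths of the various exponential sums, and that both the number of density-increment iterations and the moduli of the subprogressions that arise are controlled purely in terms of $\delta$, $k$ and the (uniformly bounded) content — so that the auxiliary indices $d$ genuinely stay of size $N^{o(1)}$, independently of $W$. Granting that, the uniform lower bound follows at once from Lucier's argument applied to $h_W$.
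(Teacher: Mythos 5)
Your overall strategy is the same as the paper's: treat $h_W$ by feeding it into Lucier's density-increment machine, using Lemma \ref{aux}(3) to see that the auxiliary polynomials arising along the way are $h_{Wd}$, and Lemma \ref{aux}(5)--(6) to get $W$-independent control of $c(\cdot)$ and $B(\cdot)$. The paper implements this by literally rerunning Lucier's iteration (his Lemma 31, restated here as Lemma \ref{iteration}) with the starting quadruple $(N,A,\delta,W)$ in place of $(N,A,\delta,1)$, and checking that the three hypotheses of that lemma remain verifiable at every step.

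Where you go astray is in identifying which hypothesis is binding and hence in your choice of $\kappa_1$. You argue that $b(h_{Wd})$ only constrains the lengths of the summation ranges and therefore that $\kappa_1=\tfrac{1}{2(k-1)}$ suffices. But Lucier's Lemma \ref{iteration} carries the explicit hypothesis $d\le N^{\varrho/4k^2}$, with $\varrho=\varrho(k)$ as defined there, and this exponent is far smaller than $\tfrac{1}{2(k-1)}$ (for $k=2$ it is $1/256$ versus $1/2$; for $k\ge3$ it is of order $k^{-4}(\log k)^{-1}$). This hypothesis does not merely encode the length of the range of $n$; it is what makes Lucier's minor-arc estimates close, and you cannot discard it without reproving his lemma from scratch — which your proposal does not do. The paper instead takes Lemma \ref{iteration} as a black box, uses Lucier's bound $\log d_l\ll_h C^{-1}\log N+\log d_0$ with $d_0=W$, and observes that the condition $d_l\le N_l^{\varrho/4k^2}$ is preserved throughout the iteration provided $W<N^{\kappa_1}$ with $\kappa_1=\varrho/16k^2$. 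Your conclusion is correct, but your stated value of $\kappa_1$ would violate the hypotheses of the very iteration lemma you are relying on; the fix is simply to take the $\kappa_1$ dictated by that lemma rather than by the coarser range-length consideration.
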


\begin{proof} Only a minor modification of Lucier's proof is needed. Lucier used a density increment argument based on the following:
\begin{lemma}[Lemma 31, \cite{lucier}] \label{iteration} Let $\varrho=\varrho(k)$ be defined by
$$\varrho=
\left\{
                                                                   \begin{array}{ll}
                                                                     1/4, & \hbox{if  $k=2$;} \\
                                                                     1/(8k^2(\log k + 1.5 \log \log k + 4.2)), & \hbox{if  $k \geq 3$.}
                                                                   \end{array}
                                                                 \right. $$
Define the function
$$\theta(x)=
\left\{
                                                                   \begin{array}{ll}
                                                                     \frac{x}{2 \log (2x^{-1})}, & \hbox{if  $k=2$;} \\
                                                                     x^{k-1}, & \hbox{if  $k \geq 3$.}
                                                                   \end{array}
                                                                 \right. $$
Let $N$ be large in terms of $h$, and assume that $$d \leq N^{\rho/4k^2}$$ Let $A$ be a subset of $\{1,\ldots, N\}$ with size $\delta N$ such that $$\delta \geq N^{-\varrho/2k}$$ If $R(h_{d},A)\leq \frac{1}{64}|A|^2$, then there exist positive integers $d'$ and $N'$, and a set $A' \subset \{1, \ldots, N' \}$ such that the following holds:
\begin{itemize}
\item $W(h_{d'},A') \leq W(h_{d},A)$,
\item $\delta' \geq \delta(1+C_1 \theta(\delta))$,
\item $C_2 \delta^{2k^2} N \leq N' \leq N$,
\item $d \leq d' \leq C_3 \delta^{-k}d$.
\end{itemize}
where $C_1,C_2,C_3$ are positive constants that depend only on $h$.
\end{lemma}
Following Lucier, suppose that $$\delta \geq C \frac{(\log \log N)^{\mu/(k-1)}}{(\log N)^{1/(k-1)}}$$ for $C$ a constant chosen later, that depends on $h$ alone. Let $$Z = [8C_1^{-1} \delta^{-(k-1)}(\log 2\delta^{-1})^{\mu-1}]$$ Suppose, for a contradiction, that that $R(h,A)\leq \frac{1}{64}(C_2^2 \delta^{4k^2})^{Z}|A|^2$. Lucier constructed a sequence of quadruples $\{ (N_{i}, A_{i}, \delta_{i},d_{i}) \}_{i=0}^{Z}$, where $N_{i},d_{i}$ are positive integers, $A_{i} \subset \{1, \ldots, N \}$, $\delta_{i}=|A_{i}|/N_{i}$,  satisfying the properties:
\begin{itemize}
\item $(N_0, A_0,\delta_0,d_0)=(N,A,\delta,1)$
\item $R(h_{d_{i}}, A_{i}) \leq \frac{1}{64}(C_2^2 \delta^{4k^2})^{Z-i} |A_{i}|^2$
\item $\delta_{i} \geq \delta_{i-1}(1+C_1 \theta(\delta_{i-1}))$
\item $C_2 \delta_{i-1}^{2k^2}N_{i-1}\leq N_{i} \leq N_{i-1}$
\item $d_{i-1} \leq d_{i} \leq C_3 \delta_{i}^{-k}d_{i-1}$
\end{itemize}
where $C_1, C_2, C_3$ are constants as in Lemma \ref{iteration} above. We can perform the iteration at step $l$ as long as the conditions of Lemma \ref{iteration} is valid:
\begin{enumerate}
\item $N_{l}$ is large in terms of $h$. Indeed, if we choose $C$ large enough we can ensure that $N_{l}\geq N^{1/2}$ for all $0 \leq l \leq Z$.
\item \label{check} $d_{l} \leq N_{l}^{\rho/4k^2}$. Indeed, we have the inequality $\log d_{l} \ll_{h} C^{-1} \log N + \log d_{0}$, so if $C$ is large enough in terms of $h$ this is satisfied.
\item $\delta_{l} \geq N_{l}^{-\varrho/2k}$. This too is ensured if $C$ is large enough.
\end{enumerate}
A calculation shows that we will end up with $\delta_{Z}>1$, a contradiction.

Now if the initial values are $(N_{0}, A_{0}, \delta_{0},d_{0})=(N, A, \delta,W)$ (instead of $(N, A, \delta, 1)$) then the same iteration goes through. The only thing that needs to be checked is the condition (\ref{check}) above. But we can ensure this by choosing $C$ sufficiently large depending on $h$ alone, as long as we keep $W$ smaller than $N^{\kappa_1}$, for $\kappa_1=\frac{\varrho}{16k^2}$, say.
\end{proof}

\section{A transference principle for intersective polynomials} \label{transference}
\subsection{An exponential sum estimate}
\begin{lemma} \label{w}
Let $f \in \Z[x]$ be a polynomial of degree $k$, and assume that $f$ is positive and increasing for $x \leq 0$. Then there is an integer $s_0(k)$ depending on $k$ alone, such that whenever $s \geq s_0$, we have
$$\int_{\T} \left| \sum_{n=1}^{N} f'(n)e(\alpha f(n))\right|^{2s} \ll_{s} c(f) f(N)^{2s-1}$$
for $N \geq B(f)$.
\end{lemma}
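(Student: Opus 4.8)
The plan is to bound the $2s$-th moment by estimating the number of solutions to an associated system of equations, following the Vinogradov–Weyl circle method philosophy. First I would expand the $2s$-th power: writing $F(\alpha) = \sum_{n=1}^N f'(n) e(\alpha f(n))$, we have
\[
\int_{\T} |F(\alpha)|^{2s}\, d\alpha = \sum_{\substack{n_1,\ldots,n_s \\ m_1,\ldots,m_s}} f'(n_1)\cdots f'(n_s) f'(m_1)\cdots f'(m_s) \int_{\T} e\bigl(\alpha(f(n_1)+\cdots+f(n_s)-f(m_1)-\cdots-f(m_s))\bigr)\, d\alpha,
\]
so the integral counts (with weights $\prod f'$) the solutions of $f(n_1)+\cdots+f(n_s) = f(m_1)+\cdots+f(m_s)$ with $1 \le n_i, m_j \le N$. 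The key point is that because $f$ has degree $k$, this system, combined with the observation that $f' \ll c(f)^{?}$ ... actually more cleanly: the weight $f'(n)$ is comparable (up to the constant $B(f)$, using \eqref{b} applied to $f$ and $f'$) to $f(n)^{(k-1)/k}$, and $\sum_{n \le N} f'(n) \ll f(N)$ by telescoping. So I would first reduce to counting solutions and then handle the weights by dyadic decomposition or by the bound $f'(n) \le f'(N)$ on the largest block.

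Second, to count the solutions of $f(n_1)+\cdots+f(n_s) = f(m_1)+\cdots+f(m_s)$, I would fix the value $t = f(n_1)+\cdots+f(n_s)$, which ranges over an interval of length $O(f(N))$, and show that each value of $t$ is attained $O_s(c(f)\, f(N)^{s-1-?})$ ... more precisely the standard bound: the number of representations of an integer as a sum of $s$ values of $f$ is, on average over $t$, about $f(N)^{s}/f(N) \cdot (\text{something})$. The cleanest route is: the number of $s$-tuples $(n_1,\ldots,n_s)$ with all $n_i \le N$ is $N^s \asymp (f(N)/b(f))^{s/k}$; for fixed right-hand side the number of $s$-tuples on the left is bounded by the divisor-type estimate coming from the fact that once $n_1, \ldots, n_{s-1}$ are chosen, $f(n_s)$ is determined, and $f$ is increasing hence injective on $\{1,\ldots,N\}$, so $n_s$ is determined. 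This gives at most $N^{s-1}$ solutions for each right-hand side value, but that is too weak by itself; the content of the lemma for $s \ge s_0(k)$ is that one gains by using that $f(n_1)+\cdots+f(n_s)$ does not concentrate — this is exactly Vinogradov's mean value theorem territory, and the role of $c(f)$ is that after dividing $f$ by its content we may assume $f$ is primitive and then a version of Weyl's inequality / Hua's lemma applies with constants depending only on $k$. So I would cite or reprove a Hua-type inequality: for a primitive integer polynomial $g$ of degree $k$, $\int_\T |\sum_{n \le N} e(\alpha g(n))|^{2s}\,d\alpha \ll_{k} N^{2s-k}$ once $2s \ge 2^k$ (say), and then insert the weights $f'(n)$, each of size $\ll f(N)/N$ up to $B(f)$, losing a factor $(f(N)/N)^{2s}$ and picking up $N^{-k}$, which combines to $c(f) f(N)^{2s-1}$ after accounting for $N \asymp (f(N)/b(f))^{1/k}$ and the relation $b(f) = c(f) b(g)$.

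The main obstacle — and the reason $s_0(k)$ must be taken large — is the passage from the trivial divisor bound to a genuine power saving, i.e. establishing the Hua-type mean value estimate uniformly in the coefficients of $f$ with the constant depending only on $k$. I expect to handle this by reducing to the primitive polynomial $g = f/c(f)$ (noting $f'(n) = c(f) g'(n)$ and $f(N) = c(f) g(N) \cdot$, wait — $f = c(f) g$ only if the content divides the constant term too; more carefully $f(x) - f(0) = c(f) g_0(x)$ for a primitive $g_0$, which suffices since the phase $e(\alpha f(n))$ only changes by the $n$-independent factor $e(\alpha f(0))$ that disappears in $|F(\alpha)|$) and then invoking the standard form of Hua's lemma, whose constant is absolute given $k$. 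The weight management via \eqref{b} and the crude bound $f'(n) \le f'(N) \ll f(N)/N$ (valid for $n \le N$, $N \ge B(f)$, since $f'$ is increasing and of degree $k-1$) is then routine book-keeping, and summing a geometric-type series over dyadic ranges of $f'(n)$ only costs an $O_s(1)$ factor. I would take $s_0(k)$ to be whatever exponent makes the cited mean value theorem applicable (e.g. $s_0(k) = 2^{k-1}$ suffices for Hua's lemma, though any admissible value works).
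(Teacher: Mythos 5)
The paper does not actually prove this lemma: in the remarks immediately following it, the author states that it is ``essentially \cite[Lemma 2.6]{lp}'' and ``standard in the context of Waring's problem, so we will skip the proof.'' There is therefore no in-paper proof to compare against, and your plan should be judged on its own merits as a reconstruction of the ``standard'' argument.

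Your overall strategy (expand the $2s$-th power into a weighted count of solutions of $f(n_1)+\cdots+f(n_s)=f(m_1)+\cdots+f(m_s)$, pass to a primitive polynomial $g$ with $f(x)-f(0)=c(f)g(x)$ so that $f'(n)=c(f)g'(n)$ while $|F(\alpha)|$ is unchanged, invoke a Hua-type mean value bound, and then do bookkeeping with \eqref{b}) is exactly the Waring-problem template the author is gesturing at, and the reduction to the primitive polynomial, including your self-correction about the constant term, is sound.

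There is, however, a genuine gap in the bookkeeping, and it sits precisely at the step you flag as ``routine.'' Trace your own arithmetic: Hua on the primitive $g$ is claimed to give $\ll N^{2s-k}$, each weight is $\ll f(N)/N$, and $N^k\asymp f(N)/b(f)$, so the product is
$$\Bigl(\tfrac{f(N)}{N}\Bigr)^{2s}N^{2s-k}=\tfrac{f(N)^{2s}}{N^k}\asymp b(f)\,f(N)^{2s-1}=c(f)\,b(g)\,f(N)^{2s-1},$$
which overshoots the target $c(f)f(N)^{2s-1}$ by a factor $b(g)=b(f)/c(f)$. This factor is not $O_s(1)$: even after normalizing the content, the leading coefficient of the primitive polynomial can be arbitrarily large (consider $g(n)=Mn^2+n$ with $M$ huge, for which $c(g)=1$). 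To close the gap you would need a mean value bound of the shape $\int_\T\bigl|\sum_{n\le N}e(\alpha g(n))\bigr|^{2s}\,d\alpha\ll_{s,k}N^{2s-k}/b(g)+N^s$, i.e.\ one that records the fact that the phases $g(n)$ are spread over an interval of length $\asymp b(g)N^k$ so that the heuristic count of off-diagonal solutions is $N^{2s}/g(N)\asymp N^{2s-k}/b(g)$. The textbook form of Hua's lemma you invoke does not give this saving, and you neither cite a refinement that does nor indicate how to derive one. As stated, your plan proves a bound with an uncontrolled factor $b(g)$, so the lemma's conclusion does not follow. (Two smaller points: Hua's lemma comes with an $N^{\varepsilon}$ loss, which you would need to absorb by taking $s$ a bit larger and reworking the exponent count; and the claimed uniformity of Hua's implied constant over all integer polynomials of a given degree is asserted but not argued.)
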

\begin{remarks}
This is essentially \cite[Lemma 2.6]{lp}, where Li and Pan showed that we can take $s_{0}=k2^{k+1}$. This result is standard in the context of Waring's problem, so we will skip the proof. It may be possible to improve upon the value of $s_0$ using Vinogradov's method, but this is not important since all we need is the existence of such a number $s_{0}$. The condition $N \geq B(f)$ is needed in order to guarantee that $\sum_{n=1}^{N} f'(n) \ll f(N)$.
\end{remarks}

Let us denote $S_{f}(x)= S_{N,f}(x)=
\left\{
                                                                   \begin{array}{ll}
                                                                     f'(n), & \hbox{if $0<x<N/2$ and $x=f(n)$ for some $n \in \Z$;} \\
                                                                     0, & \hbox{otherwise.}
                                                                   \end{array}
                                                                 \right.$
and consider $S_{f}$ as a function on $\ZN$.
\begin{corollary}
For $s \geq s_{0}(k)$, and for $N \gg b(f)B(f)^{k}$, we have
$$\left\| \widehat{S_{f}}\right\|_{2s} \ll_{s} (c(f))^{1/2s} $$
\end{corollary}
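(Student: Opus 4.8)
The plan is to realise $\widehat{S_f}$ as a normalised Weyl sum and then compare its $\ell^{2s}$-sum over $\ZN$ with the $L^{2s}(\T)$-integral estimated in Lemma \ref{w}. First I would unwind the definition: since $f$ is positive and increasing for $x\ge0$, the integers $n$ with $0<f(n)<N/2$ form an interval $\{1,\dots,M\}$, and \eqref{b} gives $M\asymp(N/b(f))^{1/k}$. Then
$$\widehat{S_f}(\xi)=\E_{x\in\ZN}S_f(x)e_N(x\xi)=\frac1N\sum_{n=1}^M f'(n)e\!\left(\tfrac{\xi}{N}f(n)\right)=\frac1N\,g\!\left(\tfrac{\xi}{N}\right),\quad\text{where } g(\alpha)=\sum_{n=1}^M f'(n)e(\alpha f(n)),$$
so that $\|\widehat{S_f}\|_{2s}^{2s}=N^{-2s}\sum_{\xi=0}^{N-1}|g(\xi/N)|^{2s}$.

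Next I would carry out the comparison. Expand $|g(\alpha)|^{2s}=g(\alpha)^s\overline{g(\alpha)}^s$ into a sum over tuples $(n_1,\dots,n_s,m_1,\dots,m_s)\in\{1,\dots,M\}^{2s}$ of $\big(\prod_i f'(n_i)\big)\big(\prod_j\overline{f'(m_j)}\big)e\big(\alpha(F(\mathbf n)-F(\mathbf m))\big)$, where $F(\mathbf n)=f(n_1)+\dots+f(n_s)$, and sum over $\xi\in\ZN$. By orthogonality of additive characters on $\ZN$, only the tuples with $F(\mathbf n)\equiv F(\mathbf m)\pmod N$ survive; but each $f(n_i),f(m_j)\in(0,N/2)$ forces $F(\mathbf n)-F(\mathbf m)\in(-sN/2,sN/2)$, so the congruence means $F(\mathbf n)-F(\mathbf m)=jN$ for one of at most $s$ integers $j$. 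For each such $j$ the corresponding sub-sum equals $\int_\T|g(\alpha)|^{2s}e(-jN\alpha)\,d\alpha$, of modulus at most $\int_\T|g(\alpha)|^{2s}\,d\alpha$. Hence $\sum_{\xi=0}^{N-1}|g(\xi/N)|^{2s}\le sN\int_\T|g(\alpha)|^{2s}\,d\alpha$.

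Finally I would apply Lemma \ref{w} to $f$ on the range $\{1,\dots,M\}$. The hypothesis $N\gg b(f)B(f)^k$ is exactly what guarantees $M\ge B(f)$ (using \eqref{b} to see $f(B(f))\le\tfrac32 b(f)B(f)^k<N/2$ once $N$ is large enough), so the lemma applies and yields $\int_\T|g(\alpha)|^{2s}\,d\alpha\ll_s c(f)f(M)^{2s-1}\le c(f)N^{2s-1}$. Stringing the estimates together, $\|\widehat{S_f}\|_{2s}^{2s}\ll_s N^{-2s}\cdot N\cdot c(f)N^{2s-1}=c(f)$, which is the claim after taking $2s$-th roots.

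I expect the only genuinely delicate point to be the orthogonality step: unlike the integral over $\T$, which picks out only the honest diagonal $F(\mathbf n)=F(\mathbf m)$, the sum over $\xi\in\ZN$ also detects the shifted diagonals $F(\mathbf n)-F(\mathbf m)=jN$. These are under control — there are only $O_s(1)$ of them and each is dominated by the honest diagonal contribution — precisely because the truncation $x<N/2$ in the definition of $S_f$ keeps every frequency $f(n)$ below $N/2$; this is the structural reason that particular cutoff appears. The remaining steps are routine bookkeeping, the substantive content being the verification that the hypothesis $N\gg b(f)B(f)^k$ places us in the regime $M\ge B(f)$ where \eqref{b}, and therefore Lemma \ref{w}, can be used.
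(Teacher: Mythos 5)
Your argument follows the same route as the paper: expand $\|\widehat{S_f}\|_{2s}^{2s}$ by orthogonality over $\ZN$, recast the resulting count of tuples as a moment integral of the Weyl sum $g(\alpha)=\sum_{n\le M}f'(n)e(\alpha f(n))$, invoke Lemma \ref{w}, and observe that the hypothesis $N\gg b(f)B(f)^k$ together with \eqref{b} forces $M\ge B(f)$ so that the lemma applies. The one place where you are more careful than the printed proof is the wraparound step: the paper passes directly from the $\ZN$-orthogonality condition $\sum_i f(n_i)\equiv\sum_j f(m_j)\pmod N$ to the moment integral over $\T$ (which enforces exact equality) with an equals sign, but for $s\ge 2$ the differences $\sum_i f(n_i)-\sum_j f(m_j)$ range over $(-sN/2,sN/2)$ and can land on nonzero multiples of $N$. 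Your observation that there are at most $O(s)$ shifted diagonals $\sum_i f(n_i)-\sum_j f(m_j)=jN$, each contributing $\int_\T|g(\alpha)|^{2s}e(-jN\alpha)\,d\alpha$ of modulus at most $\int_\T|g|^{2s}$, is exactly the right way to justify that step; it only costs a factor of $s$, which is absorbed into $\ll_s$, so the conclusion is unchanged.
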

\begin{proof}
Let $M$ be the largest integer such that $f(M) < \frac{N}{2}$. In view of (\ref{b}), if $b(f)B(f)^{k} \ll N$ then $M \geq B(f)$. We can therefore apply Lemma \ref{w} and have:
\begin{eqnarray}
\left\| \widehat{S_{N,f}}\right\|_{2s}^{2s} &=& \frac{1}{N^{2s}} \sum_{\xi \in \ZN} \left| \sum_{x \in \ZN} S_{f}(x)e_{N}(\xi x) \right|^{2s} \nonumber \\
&=& \frac{1}{N^{2s-1}} \sum_{\substack{ n_1,\ldots,n_s, m_1,\ldots, m_{s} \in \{1, \ldots, M\}\\ f(n_1)+\cdots+f(n_s) = f(n_1)+\cdots+f(n_s) \ }} f'(n_1)\cdots f'(n_s) f'(m_1)\cdots f'(m_s) \nonumber \\
&=& \frac{1}{N^{2s-1}} \int_{\T} \left| \sum_{n=1}^{M} f'(n)e(\alpha f(n))\right|^{2s} \nonumber \\
&\ll_{s}& \frac{1}{N^{2s-1}} c(f) f(M)^{2s-1} \nonumber \\
&\ll_{s}& c(f) \nonumber
\end{eqnarray}
\end{proof}
From this it immediately follows that
\begin{corollary} \label{c1}
There is a constant $\kappa_2=\kappa_2(k)$ such that for $s \geq s_0$, and for $N$ sufficiently large depending on $h$, we have
$$\left\| \widehat{S_{N,h_{W}}}\right\|_{2s} \ll_{s,h} 1 $$
for every $W<N^{\kappa_2}$.
\end{corollary}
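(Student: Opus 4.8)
The plan is to apply the preceding corollary to the polynomial $f=h_{W}$ and to check that its hypotheses hold uniformly in $W$ provided $W$ is kept below a small fixed power of $N$. First I would record the structural facts: by Lemma \ref{aux}(1), $h_{W}\in\Z[x]$ has degree exactly $k$, and by the normalisation fixed at the start of Section 2 together with Lemma \ref{aux}(2), $h_{W}$ and $h_{W}'$ are positive and increasing on the relevant range, so the positivity/monotonicity hypothesis of Lemma \ref{w} (and hence of the corollary) is satisfied with $f=h_{W}$, uniformly in $W$.

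Next I would control the three quantities attached to $h_{W}$ via Lemma \ref{aux}. Parts (5) and (6) give $B(h_{W})\le 2^{k-1}k(B(h)+2)$ and $c(h_{W})\le|\Delta(h)|^{(k-1)/2}c(h)$, both bounded purely in terms of $h$; part (4) gives $b(h_{W})\le W^{k-1}b(h)$. Consequently $b(h_{W})B(h_{W})^{k}\ll_{h}W^{k-1}$, so the requirement ``$N\gg b(f)B(f)^{k}$'' of the preceding corollary becomes $N\gg_{h}W^{k-1}$. This is implied by $W<N^{\kappa_2}$ as soon as $\kappa_2<1/(k-1)$ and $N$ is large enough depending on $h$; one may therefore take, for instance, $\kappa_2=\kappa_2(k)=1/k$.

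With these checks in place the preceding corollary yields $\bigl\|\widehat{S_{N,h_{W}}}\bigr\|_{2s}\ll_{s}(c(h_{W}))^{1/2s}$, and the uniform bound $c(h_{W})\le|\Delta(h)|^{(k-1)/2}c(h)$ from Lemma \ref{aux}(6) gives $(c(h_{W}))^{1/2s}\ll_{s,h}1$, which is the claim. There is no genuine obstacle here; the only point to watch is that the admissible range of $W$ is governed entirely by the growth of $b(h_{W})$ in $W$, and it is precisely parts (4)--(6) of Lemma \ref{aux} --- the uniform bounds on $B$ and $c$ and the merely polynomial-in-$W$ growth of $b$ --- that make the estimate uniform over all $W<N^{\kappa_2}$.
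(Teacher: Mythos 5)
Your proof is correct and follows essentially the same route as the paper: invoke the preceding corollary with $f=h_W$, and use Lemma \ref{aux} to bound $c(h_W)$ and $B(h_W)$ uniformly in $W$ while noting that $b(h_W)\le W^{k-1}b(h)$ grows only polynomially, so the hypothesis $N\gg b(f)B(f)^k$ holds for $W<N^{\kappa_2}$ with $\kappa_2=1/k$. You simply spell out explicitly which parts of Lemma \ref{aux} are being used, which the paper leaves implicit.
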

\begin{proof}
Lemma \ref{aux} tells us that $c(h_{W})$ is uniformly bounded in terms of $h$. Thus we need $b(h_{W})B(h_{W})^{k} \ll N$ for all $W \leq N^{\kappa_2}$. But this also follows from Lemma \ref{aux}. Actually we may take $\kappa_2(k)=1/k$.
\end{proof}

\subsection{A transference principle}
Let us reformulate Theorem \ref{lucier} under the following form:
\begin{proposition} \label{s}
There is a constant a constant $c(h,\delta)$ such that the following holds. If $f : \ZN \rightarrow [0, \infty)$ is a function such that $\E_{\ZN} f \geq \delta$, then
$$\sum_{a \in \ZN} \sum_{d \in \ZN} f(a) f(a+n) S_{h}(d) \geq c(h,\delta) N^2$$
for $N$ sufficiently large depending on $h$ and $\delta$.
\end{proposition}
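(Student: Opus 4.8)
The plan is to read the proposition off from Theorem \ref{lucier} almost verbatim. All of the content lies in two routine manoeuvres: replacing the nonnegative weight $f$ by a constant multiple of the indicator of one of its level sets, and arranging that the cyclic differences selected by $S_h$ are honest integer differences --- which is possible precisely because $S_h$ is supported in $(0,N/2)$.

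First I would localise. Set $M=\lfloor N/2\rfloor$ and, for $t\in\ZN$, let $J_t=\{t,t+1,\dots,t+M-1\}$ (reduced mod $N$). Averaging over $t$ gives $\E_{t\in\ZN}\sum_{a\in J_t}f(a)=\frac{M}{N}\sum_{a\in\ZN}f(a)\ge\delta M$, so some $J_{t_0}$ carries $f$-mass at least $\delta M$; after translating by $-t_0$, which changes neither the bilinear sum in question nor $S_h$, I may assume $\sum_{a=0}^{M-1}f(a)\ge\delta M$. Now let $A=\{a: 0\le a<M,\ f(a)\ge\delta/2\}$. Using $f\le1$, one gets $|A|\ge(\delta/2)M$ and $f\ge(\delta/2)1_A$ pointwise, so the sum to be bounded from below is at least $(\delta/2)^2\sum_{a,n\in\ZN}1_A(a)1_A(a+n)S_h(n)$.

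Next I would identify this last sum with $R(A,h)$. The key observation is that if $a,a'\in\{0,\dots,M-1\}$ and $a'-a\equiv h(m)\pmod N$ with $0<h(m)<N/2$, then both $a'-a$ and $h(m)$ lie strictly between $-N/2$ and $N/2$, whence $a'-a=h(m)$ exactly --- there is no wraparound. Consequently, for each $m$ with $0<h(m)<N/2$ the inner sum $\sum_a 1_A(a)1_A(a+h(m))$ equals $r(h,m,A)$, and it vanishes once $h(m)\ge M$; since moreover $S_h$ vanishes on $[N/2,\infty)$, summing against $S_h(n)$ reproduces exactly $\sum_{m\ge0}h'(m)r(h,m,A)=R(A,h)$. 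Finally I would invoke Theorem \ref{lucier}: after a shift, $A$ is a subset of $\{1,\dots,M\}$ of density $\ge\delta/2$, so provided $N$ --- hence $M=\lfloor N/2\rfloor$ --- is large enough in terms of $h$ and $\delta$, we get $R(A,h)\ge c(h,\delta/2)|A|^2\ge c(h,\delta/2)(\delta M/2)^2$; since $M\gg N$ this is the proposition, with $c(h,\delta)$ a fixed multiple of $\delta^4 c(h,\delta/2)$. (To sidestep any issue of monotonicity of Lucier's constant in the density, one may first discard points of $A$ until its density is exactly $\delta/2$, since $R(\cdot,h)$ only decreases under such a deletion.)

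I do not expect a genuine obstacle here: the proposition is Theorem \ref{lucier} in disguise. The one step deserving care is the localisation together with the no-wraparound observation --- one must pass to a subinterval of length at most $N/2$, and it is exactly the truncation of $S_{N,f}$ at $N/2$ that forces the cyclic and honest counts to agree. It is also worth recording that the conclusion compels an upper bound $f\le1$ in the hypothesis, since a tall point mass would make the left-hand side vanish while $\E f\ge\delta$.
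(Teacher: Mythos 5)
The paper gives no proof of Proposition \ref{s} at all; it merely announces it as a ``reformulation'' of Theorem \ref{lucier}. Your argument is the natural way to make that reformulation precise, and it is correct: localising $f$ to a window of length $\lfloor N/2\rfloor$ (so that the truncation of $S_h$ at $N/2$ rules out wraparound), passing to a level set $A$ of density $\gtrsim\delta$, identifying the cyclic bilinear sum over $A$ with the integer count $R(A,h)$, and then invoking Theorem \ref{lucier} (discarding points of $A$ if one wants the density to be exactly $\delta/2$, since $R(\cdot,h)$ is monotone under deletion) all go through. The arithmetic with the constants is also right, giving $c(h,\delta)\gg\delta^4\,c_{\mathrm{Lucier}}(h,\delta/2)$.

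Equally valuable is your observation that Proposition \ref{s} as stated is actually false without a uniform upper bound on $f$: the point-mass example ($f(0)=\delta N$, $f\equiv0$ elsewhere, so that $\E f=\delta$ while $S_h(0)=0$ makes the bilinear form vanish) is a genuine counterexample, and your level-set step really does use $f\le1$. The hypothesis $0\le f\le1$ should have been included. This omission is harmless in context, since the proposition is applied in the proof of Proposition \ref{trans} only to the function $f_1$, which satisfies $0\le f_1\le 1+O_{M,q,\epsilon}(\eta)$ (and the paper explicitly allows modifying $f_1$ by $O(\eta)$ to land in $[0,1]$); but the statement itself is incomplete, and you were right to flag it.
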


We are now in a position to state the following transference principle for intersective polynomials:

\begin{proposition}\label{trans} Let $\eta, \delta, M, q$ be positive parameters such that $2 <q < \frac{4s_0}{2s_0-1}$, where $s_0=s_0(k)$ as in Lemma \ref{w}. Suppose $f, \nu$ are function $\ZN \rightarrow \R$ satisfying the following conditions:
\begin{enumerate}
    \item \label{con1} $0 \leq f \leq \nu$
    \item \label{con2} $\E_{n \in \ZN} f(n) \geq \delta$
    \item \label{con3} $\nu$ satisfies the pseudorandom condition $|\widehat{\nu}(\xi)-1_{\xi=0}|\leq \eta$ for all $\xi \in \ZN$.
    \item \label{con4} $\|\widehat{f}\|_{q} \leq M$.
\end{enumerate}
Then for $N$ large enough depending on $h$ and $\delta$, we have $$\sum_{a \in \ZN} \sum_{d \in \ZN} f(a) f(a+d) S_{h}(d)
\geq \left( \frac{1}{2}c(h,\delta) - O_{M,q, \delta}(\eta) \right) N^2$$
\end{proposition}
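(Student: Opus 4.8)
The strategy is the standard dense-model / transference argument of Green--Tao, adapted to the three-linear-form setup $\sum_{a,d} f(a)f(a+d)S_h(d)$. Write $\Lambda(f,g,\phi)=\sum_{a\in\ZN}\sum_{d\in\ZN} f(a)g(a+d)\phi(d)$ for the trilinear form, so that the quantity we must bound below is $\Lambda(f,f,S_h)$. The first step is to decompose $f=f_1+f_2$ into a ``structured'' part $f_1$ and a ``uniform'' part $f_2$ via a Fourier cutoff: fix a threshold $\varepsilon>0$ (to be chosen in terms of $\delta$), let $R=\{\xi\in\ZN: |\widehat f(\xi)|\ge\varepsilon\}$, and set $\widehat{f_1}=\widehat f\cdot 1_R$, $\widehat{f_2}=\widehat f\cdot 1_{R^c}$. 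By condition (\ref{con4}) and Chebyshev, $|R|\le (M/\varepsilon)^q$, so $R$ is a \emph{bounded} (in terms of $M,q,\varepsilon$) set of frequencies; this is where the $\|\widehat f\|_q\le M$ hypothesis is used, replacing the majorant-only information that would otherwise be available.

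Next I would establish the two properties of this decomposition that drive the argument. First, $f_1$ is a genuine ``dense model'': we have $0\le f_1$ up to a small error and $\E f_1=\E f\ge\delta$ (since $\widehat{f_1}(0)=\widehat f(0)$), and moreover $\|f_1-f\|_\infty$ is not what we want — rather, the point is that $f_1$ is bounded by roughly $1+O(\eta\cdot|R|)$ pointwise, using condition (\ref{con3}) to control $\widehat\nu$ and hence $\widehat f$ on all of $R$, so that $f_1$ is (after a negligible adjustment) a function taking values in $[0,1+o(1)]$ with mean $\ge\delta$. Applying Proposition \ref{s} to (a slight truncation of) $f_1$ then yields $\Lambda(f_1,f_1,S_h)\ge (c(h,\delta)-o(1))N^2$. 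Second, $f_2$ is Fourier-uniform: $\|\widehat{f_2}\|_\infty<\varepsilon$, and a standard interpolation using $\|\widehat{f_2}\|_q\le\|\widehat f\|_q\le M$ and Parseval-type control on $\|\widehat{f_2}\|_2$ (again via $f\le\nu$ and condition (\ref{con3})) shows $\|\widehat{f_2}\|_{2s_0}$ is small — small like a positive power of $\varepsilon$. This is the role of the curious constraint $2<q<\frac{4s_0}{2s_0-1}$: it is exactly the range in which $\|\widehat{f_2}\|_{2s_0}$ can be made $o(1)$ by taking $\varepsilon\to0$ while only knowing $\|\widehat f\|_q$ bounded.

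The third step is the telescoping: expand $\Lambda(f,f,S_h)=\Lambda(f_1,f_1,S_h)+\Lambda(f_2,f_1+f,S_h)+\Lambda(f_1,f_2,S_h)$ — more carefully, split into the $f_1f_1$ term plus three error terms each containing at least one factor of $f_2$. Each error term is estimated on the Fourier side: by Plancherel,
$$\Lambda(g,g',S_h)=N^2\sum_{\xi\in\ZN}\widehat g(-\xi)\,\widehat{g'}(\xi)\,\widehat{S_h}(\xi),$$
and applying Hölder with exponents $(2s_0,2s_0,\frac{s_0}{s_0-1})$ bounds this by $N^2\|\widehat g\|_{2s_0}\|\widehat{g'}\|_{2s_0}\|\widehat{S_h}\|_{s_0/(s_0-1)}$; since $\frac{s_0}{s_0-1}\le 2s_0$ for $s_0\ge1$, Corollary \ref{c1} (with $\kappa_2$, $N$ large) gives $\|\widehat{S_h}\|_{s_0/(s_0-1)}\ll_{s_0,h}1$. [Here one uses $S_h$ in place of $S_{h_W}$; the statement is for $h$ itself so Corollary \ref{c1} applies directly, but the same bound holds uniformly for $h_W$.] In each error term at least one of $\widehat g,\widehat{g'}$ is $\widehat{f_2}$, which contributes the small factor from step two, while the other factor's $2s_0$-norm is bounded by $\|\widehat f\|_{2s_0}\ll_M 1$ via the same interpolation. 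Hence every error term is $O_{M,q,\delta}(\varepsilon^{c})N^2$ for some $c=c(s_0)>0$.

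The main obstacle is the bookkeeping in step two: one must choose $\varepsilon$ as a function of $\delta$ (and $M,q$) so that simultaneously (a) the $o(1)$ loss in ``$f_1$ has mean $\ge\delta$ and is bounded by $1+o(1)$'' is small enough that Proposition \ref{s} still delivers $\ge\tfrac12 c(h,\delta)N^2$ rather than $c(h,\delta)N^2$, and (b) the resulting error bound $O_{M,q,\delta}(\varepsilon^c)$ genuinely has the claimed shape $O_{M,q,\delta}(\eta)$ — which forces tracking how $\eta$ enters: the pseudorandomness error $\eta$ appears both through the pointwise bound on $f_1$ (size $\eta|R|\le\eta(M/\varepsilon)^q$) and through the $\|\widehat{f_2}\|_2$ estimate, and one must verify these are all $O_{M,q,\delta}(\eta)$ once $\varepsilon$ has been fixed in terms of $\delta$ only. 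Once $\varepsilon=\varepsilon(\delta)$ is pinned down, everything else is routine Hölder and Plancherel, so I would spend the bulk of the write-up making the dependence of constants explicit and checking that the truncation of $f_1$ needed to apply Proposition \ref{s} (which wants a function into $[0,\infty)$ with mean $\ge\delta$; here $[0,1+o(1)]$ suffices after rescaling) costs only another $o(1)$.
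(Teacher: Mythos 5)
Your overall architecture (structured + uniform decomposition, apply Proposition~\ref{s} to the structured piece, kill the error terms by H\"older in Fourier space) matches the paper's, but two of the concrete steps contain genuine errors.

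\textbf{(1) The sharp Fourier cutoff does not produce a function to which Proposition~\ref{s} applies.} The paper does \emph{not} set $\widehat{f_1}=\widehat f\cdot 1_R$; it defines $f_1(n)=\E_{m_1,m_2\in B}f(n+m_1-m_2)$, a physical-space average of $f$ over a Bohr set $B$ attached to the large spectrum $\Omega$. Because this is an average of nonnegative values, $f_1\ge 0$ automatically, and because $f\le\nu$ and $\nu$ is pseudorandom, one gets $f_1\le 1+(N/|B|)\eta$ pointwise, with $N/|B|$ controlled by $|\Omega|\le(M/\epsilon)^q$. Your sharp cutoff $f_1=\sum_{\xi\in R}\widehat f(\xi)e_N(-\cdot\,\xi)$ has neither property: it can take negative values (the complementary piece $f_2$ need not be small pointwise, and where $f=0$ we have $f_1=-f_2$), and the only pointwise upper bound you get is $|f_1(n)|\le\sum_{\xi\in R}|\widehat f(\xi)|\le |R|\,\E f$, which is of size $|R|$, not $1+O(\eta|R|)$. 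Your stated justification (``using condition~(3) to control $\widehat\nu$ and hence $\widehat f$ on all of $R$'') relies on a nonexistent pointwise domination $|\widehat f(\xi)|\lesssim|\widehat\nu(\xi)|$ for $\xi\neq 0$; the majorant hypothesis $0\le f\le\nu$ gives no such thing. Truncating $f_1$ at $0$ afterwards destroys the mean and Fourier control, so this cannot be patched by ``a slight truncation'' as you suggest.

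\textbf{(2) The H\"older exponents in the error estimate go the wrong way.} You propose bounding the trilinear error terms by $\|\widehat g\|_{2s_0}\|\widehat{g'}\|_{2s_0}\|\widehat{S_h}\|_{s_0/(s_0-1)}$ and invoke Corollary~\ref{c1} via ``$s_0/(s_0-1)\le 2s_0$''. But for $\ell^p$ norms on $\ZN$, a \emph{smaller} exponent gives a \emph{larger} norm: $\|\widehat{S_h}\|_{s_0/(s_0-1)}\ge\|\widehat{S_h}\|_{2s_0}$ whenever $s_0/(s_0-1)\le 2s_0$. Corollary~\ref{c1} bounds $\|\widehat{S_h}\|_{2s}$ only for $s\ge s_0$, i.e.\ exponents $\ge 2s_0$, so it says nothing about $\|\widehat{S_h}\|_{s_0/(s_0-1)}$ — and since $s_0=k2^{k+1}$ is large, $s_0/(s_0-1)$ is close to $1$, where the norm is genuinely big. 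The paper instead applies H\"older as $\|\widehat{f_2}\|_\infty^{t}\,\|\widehat{f_1}\|_q\,\|\widehat{f_2}\|_q^{1-t}\,\|\widehat{S_h}\|_{2s_0}$ with $t$ determined by $\frac{2-t}{q}+\frac{1}{2s_0}=1$, which places the $S_h$ factor at exactly the exponent $2s_0$ that Corollary~\ref{c1} controls; the hypothesis $q<\frac{4s_0}{2s_0-1}$ is exactly what guarantees $t>0$ so that the $\|\widehat{f_2}\|_\infty^t\le(3(1+\eta)\epsilon)^t$ factor gives the needed smallness. In your write-up the constraint on $q$ is mentioned but never actually used, which is a symptom of the exponent bookkeeping being off.

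To repair the argument you should replace the sharp Fourier cutoff with the Bohr-set convolution $f_1=\E_{m_1,m_2\in B}f(\cdot+m_1-m_2)$ and redo the H\"older step with the $S_h$ factor pinned at the exponent $2s_0$, splitting the remaining dual weight between one $\ell^\infty$ factor on $\widehat{f_2}$ and $\ell^q$ factors on $\widehat{f_1},\widehat{f_2}$ (both $\le M$), as the paper does.
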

We proceed as in \cite[Proposition 5.1]{gt-chen}. Let us recall in the form of a lemma the following decomposition result contained in the proof of \cite[Proposition 5.1]{gt-chen}:
\begin{lemma}Suppose $0<\epsilon <1$. Let
$$\Omega=\{a \in \ZN: |\widehat{f}(a)| \geq \epsilon \}$$ and
$$B=B(\Omega, \epsilon)=\{m \in \ZN: |1-e_{N}(am)| \geq \epsilon \textrm{ for all } a \in \Omega \}$$
Let $$f_1(n)=\E_{m_1, m_2 \in B}f(n+m_1-m_2)$$ and
$f_2=f-f_1$ is the uniform part. Then $f_1$ and $f_2$ satisfy the following properties:
\begin{enumerate}
    \item $0 \leq f_1 \leq 1 + (N/|B|)\eta$,
    \item $\E_{\ZN}(f_1)=\E_{\ZN}(f)$
    \item $\| \widehat{f_2}(\psi)\|_{\infty} \leq 3(1+\eta)\epsilon$,
    \item For every $\xi \in \ZN$, we have $|\widehat{f_1}(\xi)|,|\widehat{f_2}(\xi)| \leq |\widehat{f}(\xi)|$.
\end{enumerate}
\end{lemma}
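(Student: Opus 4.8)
The plan is to deduce all four assertions from one computation: the Fourier multiplier attached to the Bohr-set averaging operator that defines $f_1$. The first step is to expand
$$\widehat{f_1}(\xi)=\E_{n\in\ZN}\,\E_{m_1,m_2\in B}\,f(n+m_1-m_2)\,e_N(n\xi),$$
make the change of variables $n\mapsto n-m_1+m_2$, and factor the now-independent averages over $m_1$ and $m_2$. This gives the multiplier identity
$$\widehat{f_1}(\xi)=c_\xi\,\widehat{f}(\xi),\qquad c_\xi:=\Bigl|\E_{m\in B}e_N(m\xi)\Bigr|^2=\Bigl(\tfrac{N}{|B|}\Bigr)^{2}\bigl|\widehat{1_B}(\xi)\bigr|^{2}\in[0,1],$$
whence $\widehat{f_2}(\xi)=(1-c_\xi)\widehat{f}(\xi)$. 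Granting this, property (4) is immediate because $c_\xi$ and $1-c_\xi$ lie in $[0,1]$; property (2) is the $\xi=0$ case (note $c_0=1$), equivalently the translation invariance of $\E_{\ZN}$; and $f_1\ge0$ in property (1) is clear from $f\ge0$.

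For the upper bound in property (1) I would replace $f$ by its majorant: $f_1(n)\le\E_{m_1,m_2\in B}\nu(n+m_1-m_2)$, and expand the right-hand side by Fourier inversion as $\sum_\xi\widehat{\nu}(\xi)\,c_\xi\,e_N(-n\xi)$. This is a real number whose $\xi=0$ term equals $\widehat\nu(0)=\E\nu\le1+\eta$, while the remaining terms contribute at most $\eta\sum_{\xi\ne0}c_\xi$ in modulus. Parseval gives $\sum_\xi c_\xi=(N/|B|)^2\|\widehat{1_B}\|_2^2=N/|B|$, so $\sum_{\xi\ne0}c_\xi=N/|B|-1$ and the total is at most $(1+\eta)+\eta(N/|B|-1)=1+(N/|B|)\eta$, exactly as claimed.

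For property (3) I would split on whether $\xi\in\Omega$. If $\xi\notin\Omega$, then $|\widehat{f_2}(\xi)|\le|\widehat{f}(\xi)|<\epsilon$ from the multiplier identity and $1-c_\xi\le1$. If $\xi\in\Omega$, then for every $m\in B$ the definition of $B$ forces $|1-e_N(m\xi)|\le\epsilon$, so $\bigl|\E_{m\in B}e_N(m\xi)\bigr|\ge1-\epsilon$ and hence $1-c_\xi\le1-(1-\epsilon)^2\le2\epsilon$; combined with the a priori bound $|\widehat{f}(\xi)|\le\widehat{f}(0)=\E f\le\E\nu=\widehat\nu(0)\le1+\eta$, valid for all $\xi$ since $0\le f\le\nu$, this yields $|\widehat{f_2}(\xi)|=(1-c_\xi)|\widehat{f}(\xi)|\le2(1+\eta)\epsilon$. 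In all cases $\|\widehat{f_2}\|_\infty\le3(1+\eta)\epsilon$.

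I expect the only step needing real care — and the only place where the pseudorandomness of $\nu$ is genuinely used — to be the majorant estimate for $f_1$: there one sums $|\widehat\nu(\xi)|$ against the non-negative weights $c_\xi$, whose total mass $N/|B|$ is large, so merely $L^2$ control of $\widehat\nu$ would be useless; it is precisely the uniform bound $|\widehat\nu(\xi)-1_{\xi=0}|\le\eta$ that keeps this sum in check, which is also why $B$ must not be taken too small. Everything else is routine manipulation of the identity $\widehat{f_1}(\xi)=c_\xi\widehat{f}(\xi)$, exactly as in the proof of \cite[Proposition 5.1]{gt-chen}.
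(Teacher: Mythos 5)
Your proof is correct and is essentially the argument of Green--Tao \cite[Proposition 5.1]{gt-chen} that the paper cites for this lemma without reproducing it: the multiplier identity $\widehat{f_1}(\xi)=c_\xi\widehat{f}(\xi)$ with $c_\xi=\bigl|\E_{m\in B}e_N(m\xi)\bigr|^2$, the Parseval computation $\sum_\xi c_\xi=N/|B|$ for the majorant bound, and the case split on $\xi\in\Omega$ for property (3) are all exactly the standard steps. One remark: you have (rightly) read the Bohr set as $\{m\in\ZN:\ |1-e_N(am)|\le\epsilon \text{ for all } a\in\Omega\}$; the ``$\geq$'' in the statement is a typo, and with it taken literally both your estimate $1-c_\xi\le 2\epsilon$ for $\xi\in\Omega$ and the lower bound $|B|\ge(\epsilon/C)^{|\Omega|}$ used later in the paper would fail.
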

\begin{proof}[Proof of Proposition \ref{trans}]
We write
\begin{eqnarray}
\sum_{a,d \in \ZN}  f(a) f(a+d) S_{h}(d) &=& \sum_{a, d \in \ZN} f_1(a)f_1(a+d) S_{h}(d) + \sum_{a, d \in \ZN} f_1(a) f_2(a+d) S_{h}(d) \nonumber \\
&+& \sum_{a, d \in \ZN} f_2(a) f_1(a+d) S_{h}(d) + \sum_{a, d \in \ZN}
f_2(a) f_2(a+d) S_{h}(d) \nonumber
\end{eqnarray}
Note that since $\| \widehat{f}(\xi) \|_{q} \leq M$, we have $| \Omega | \leq (M/ \epsilon)^{q}$. Also, $|B| \geq (\epsilon/C)^{|\Omega|}$ for some absolute constant $C$. Thus we have $0 \leq f_1 \leq 1+(C/\epsilon)^{(M/\epsilon)^{q}}\eta=1+O_{M, \epsilon, q}(\eta)$.
Applying Proposition \ref{s} to the function $f_1$ (possibly modified by $O_{M,q,\epsilon}(\eta)$), we have
$$\sum_{a \in \ZN} \sum_{d \in \ZN} f_1(a) f_1(a+d) S_{h}(d) \geq (c(h,\delta)-O_{M,q,\epsilon}(\eta))N^2$$

Our goal is to show that the three last terms are small in absolute value. We consider the second term; the other two terms are treated similarly. We have
\begin{eqnarray} \left| \sum_{a, d \in \ZN} f_1(a) f_2(a+d)
S_{h}(d) \right| &=& N \left| \sum_{a \in \ZN} f_1(a) f_2*S_{h}(a) \right| \nonumber \\
&=& N^2 \left| \sum_{\xi \in \ZN} \overline{\widehat{f_1}(\xi)} \widehat{f_2}(\xi) \widehat{S_{h}}(\xi) \right| \nonumber \\
&\leq& N^2 \sum_{\xi \in \ZN} |\widehat{f_1}(\xi)| |\widehat{f_2}(\xi)|
|\widehat{S_{h}}(\xi)| \nonumber
\end{eqnarray}
By H\"{o}lder's inequality, $$\sum_{\xi \in \ZN} |\widehat{f_1}(\xi)| |\widehat{f_2}(\xi)|
|\widehat{S_{h}}(\xi)| \leq \|\widehat{f_2}\|^{t}_{\infty} \|\widehat{f_1}\|_{q} \| \widehat{f_2}\|^{1-t}_{q}  \|\widehat{S_{h}} \|_{2s_0}$$ where $t>0$ is such that $\frac{2-t}{q}+\frac{1}{2s_0}=1$. By Corollary \ref{c1} we know that $\|\widehat{S_{h}} \|_{q} \ll_{q} 1 $. Thus $\sum_{a,d \in \ZN}  f(a) f(a+d) S_{h}(d) \ll_{q} (1+\eta)^{t}\epsilon^{t}M^{2-t}$. We have similar estimates for the other two terms. Thus by choosing $\epsilon$ sufficiently small depending on $M,q, \delta$, the contribution of the three last terms is less than $\frac{1}{2}c(h,\delta)$.

Therefore,
$$\sum_{a \in \ZN} \sum_{d \in \ZN} f(a) f(a+d) S(d) \geq \left( \frac{1}{2}c(P,\delta)-O_{M,q}(\eta) \right) N^2$$
as required.
\end{proof}
From Proposition \ref{trans} we immediately have the following:
\begin{corollary} \label{trans2}
Let $\kappa=\min(\kappa_1, \kappa_2)$ where $\kappa_1$ is the constant in Theorem \ref{ul} and $\kappa_2$ is the constant in Corollary \ref{c1}. Then under the same hypothesis as in Proposition \ref{trans}, we have
$$\sum_{a \in \ZN} \sum_{d \in \ZN} f(a) f(a+d) S_{h_{W}}(d)
\geq \left( \frac{1}{2}c(h,\delta) - O_{M,q, \delta}(\eta) \right) N^2$$
for all $N$ large enough depending on $h$ and $\delta$, and $W<N^{\kappa}$.
\end{corollary}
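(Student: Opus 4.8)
The plan is to derive Corollary \ref{trans2} by running the exact argument of Proposition \ref{trans}, but with $h$ replaced throughout by $h_W$. The one place where we used $h$ in an essential, quantitative way was the pair of inputs: first, the quantitative counting estimate (Proposition \ref{s}), which supplied the main term $c(h,\delta)N^2$; and second, the $\ell^{2s_0}$-control of $\widehat{S_h}$ coming from Corollary \ref{c1}, which made the three cross terms negligible. Both of these have uniform-in-$W$ analogues already available in the excerpt: Theorem \ref{ul} gives $R(A,h_W)\geq C(h,\delta)|A|^2$ for all $W<N^{\kappa_1}$ with the \emph{same} constant $C(h,\delta)=c(h,\delta)$, and Corollary \ref{c1} itself gives $\|\widehat{S_{N,h_W}}\|_{2s}\ll_{s,h}1$ for all $W<N^{\kappa_2}$. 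So the strategy is simply to take $\kappa=\min(\kappa_1,\kappa_2)$, assume $W<N^{\kappa}$, and observe that both ingredients are valid in this range.

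Concretely, I would first note that Theorem \ref{ul} can be reformulated, exactly as Theorem \ref{lucier} was reformulated into Proposition \ref{s}, as the statement: if $f:\ZN\to[0,\infty)$ has $\E_{\ZN}f\geq\delta$, then $\sum_{a,d\in\ZN}f(a)f(a+d)S_{h_W}(d)\geq c(h,\delta)N^2$ for $N$ large depending on $h$ and $\delta$ and $W<N^{\kappa_1}$. (This is the standard passage between a lower bound on $R(A,h_W)$ for sets $A$ and a lower bound on the weighted count for nonnegative functions $f$ — one replaces $A$ by the level set $\{f\geq\delta/2\}$, or averages over dyadic level sets; since $S_{h_W}$ is supported on $\{d:d=h_W(n)\}$ with weight $h_W'(n)$, the weighted sum is precisely $R$ up to the truncation at $N/2$, which only costs a constant factor by \eqref{b} and Lemma \ref{aux}(4).) Then I would repeat the decomposition $f=f_1+f_2$ verbatim, obtaining $0\leq f_1\leq 1+O_{M,\epsilon,q}(\eta)$ from conditions \eqref{con3} and \eqref{con4} exactly as before, apply the reformulated Theorem \ref{ul} to $f_1$ to get the main term $(c(h,\delta)-O_{M,q,\epsilon}(\eta))N^2$, and bound the three cross terms by the same Fourier/Hölder computation, now invoking $\|\widehat{S_{h_W}}\|_{2s_0}\ll_{s_0,h}1$ from Corollary \ref{c1} in place of the bound on $\|\widehat{S_h}\|_{2s_0}$. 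Choosing $\epsilon$ small depending on $M,q,\delta$ (and $h$ through the implied constant) makes the cross terms smaller than $\tfrac12 c(h,\delta)N^2$, which yields the claimed bound.

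The only genuine point to check — and this is really the sole place where the argument is not literally identical to Proposition \ref{trans} — is that all the $h$-dependent implied constants appearing in the cross-term estimate are \emph{uniform in $W$} for $W<N^{\kappa}$. This is where Lemma \ref{aux} does the work: it guarantees that $c(h_W)$ and $B(h_W)$ are bounded in terms of $h$ alone, so the hypotheses of the Corollary preceding Corollary \ref{c1} (namely $N\gg b(h_W)B(h_W)^k$) are met for $W<N^{\kappa_2}$ with $\kappa_2=1/k$, and the resulting bound $\|\widehat{S_{N,h_W}}\|_{2s}\ll_{s,h}1$ has no hidden $W$-dependence. Likewise the constant $\kappa_1$ in Theorem \ref{ul} and the constant $C(h,\delta)$ there depend only on $k$ and on $h,\delta$ respectively, not on $W$. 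Hence taking $\kappa=\min(\kappa_1,\kappa_2)$ everything goes through uniformly, and the corollary is immediate. I do not anticipate a real obstacle here: the work has already been done in Theorem \ref{ul} and Corollary \ref{c1}, and Corollary \ref{trans2} is the formal combination of Proposition \ref{trans}'s scheme with those two uniform inputs.
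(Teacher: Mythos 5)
Your proposal is correct and matches the paper's (unstated) reasoning: the paper dispatches this corollary with the single phrase ``From Proposition \ref{trans} we immediately have the following,'' and your elaboration — that both ingredients (the counting lower bound from Proposition \ref{s} and the $\ell^{2s_0}$ control of $\widehat{S}$) have uniform-in-$W$ versions supplied by Theorem \ref{ul} and Corollary \ref{c1}, valid for $W<N^{\kappa}$ with $\kappa=\min(\kappa_1,\kappa_2)$, so the argument of Proposition \ref{trans} transfers verbatim with $h$ replaced by $h_W$ — is precisely the intended justification.
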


\section{Construction of a pseudorandom measure that majorizes the primes} \label{construction}
In this section we will find functions $f, \nu$ satisfying the
conditions of Proposition 1 such that $f$ is supported on the Chen
primes. This is done exactly the same way as in the proof of \cite[Theorem 1.2]{gt-chen}, the main tool being the Hardy-Littlewood majorant property for objects called ``enveloping sieves''.

Let us recall the settings from \cite{gt-chen}. Consider $F=\prod_{j=1}^{k}(a_{j}n+b_{j})$, a product of $k$ linear factors with integer coefficients, no two linear factors are rational multiples of each other.

Let $X=X(F)=\{n \in \Z^{+}: F(n) \textrm{ is the product of $k$ primes} \}$.
For any $q \geq 1$, let  $X_{q}=\{n \in \Z_{q}:(F(n), q)=1 \}$. Thus $X_{R!}=\{n \in \Z: (d,F(n))=1 \textrm{ for all }1 \leq d \leq R \}$.
Let $\gamma(q)=\frac{|X_{q}|}{q}$. We assume that $\gamma(q)>0$ for all $q \geq 1$. Let $\mathfrak{S}_{F}$ be the singular series $\mathfrak{S}_{F}=\prod_{p \textrm{ prime}}\frac{\gamma(p)}{\left(1-\frac{1}{p} \right)^{k}}$.

\begin{proposition}[Proposition 3.1, \cite{gt-chen}]
Let $F$ be as above, with coefficients $a_i,b_i$ satisfying $|a_i|,|b_i| \leq N$. Let $R \leq N$ be a large integer.
Then there is a non-negative function $\beta := \beta_R: \mathbb{Z} \rightarrow \R^+$, called the envelopping sieve associated to $F$ and $R$,
with the following properties:
\begin{itemize}
\item[(i)] \textup{(Majorant property)} We have
\begin{equation}\label{sing-series} \beta(n) \gg_k \mathfrak{S}_F^{-1} \log^k R \mathbf{1}_{X_{R!}}(n)
\end{equation}
for all integers $n$. In particular, $\beta(n)$ is non-negative.
\item[(ii)] \textup{(Crude upper bound)}  We have
\begin{equation}\label{upper-crude}
\beta(n) \ll_{k,\epsilon} N^{\epsilon}
\end{equation}
for all $0 < n \leq N$ and $\epsilon > 0$.
\item[(iii)] \textup{(Fourier expansion)} We have
\begin{equation}\label{eq3.20} \beta(n) \; = \; \sum_{q \leq R^2} \sum_{a \in \Z_q^*}w(a/q)e_q(-an),\end{equation}
where $w(a/q) = w_R(a/q)$ obeys the bound
\begin{equation}\label{star} |w(a/q)| \; \ll_{k,\epsilon} \; q^{\epsilon - 1}\end{equation}
for all $q \leq R^2$ and $a \in \Z^*_q$.  Also we have $w(0) = w(1) = 1$.
\item[(iv)] \textup{(Fourier vanishing properties)} Let $q \leq R^2$ and $a \in \Z_q^*$.  If $q$ is not square-free,
then $w(a/q) = 0$.  Similarly, if $\gamma(q) = 1$ and $q > 1$, then $w(a/q) = 0$.
\end{itemize}
\end{proposition}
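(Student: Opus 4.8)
The plan is not to prove this from scratch: the statement is quoted verbatim as Proposition 3.1 of \cite{gt-chen}, and for our purposes it is used as a black box, so I would simply cite it. For the reader's orientation I indicate the shape of the construction underlying it. The function $\beta_R$ is the Selberg \emph{enveloping sieve} (a normalized truncated divisor sum of Goldston--Y{\i}ld{\i}r{\i}m type) attached to $F$: one fixes smooth Selberg weights $\lambda_d$, supported on squarefree $d \leq R$ that are coprime to the primes $p$ with $\gamma(p)=1$ and roughly of size $\lambda_d \approx \mu(d)\,g(\log(R/d)/\log R)$ for a fixed cutoff $g$, and sets
$$\beta(n) \; = \; \frac{1}{K}\sum_{\substack{d_1,d_2 \leq R \\ d_1 \mid F(n),\ d_2 \mid F(n)}} \lambda_{d_1}\lambda_{d_2} \; = \; \frac{1}{K}\Big(\sum_{\substack{d \leq R \\ d \mid F(n)}} \lambda_d\Big)^2,$$
with the constant $K>0$ chosen precisely so that the Fourier coefficients below satisfy $w(0)=w(1)=1$.

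Given this definition, properties (i) and (ii) are essentially immediate. For (i), $\beta$ is a square, hence non-negative; and if $n \in X_{R!}$, i.e.\ $F(n)$ has no prime factor $\leq R$, then only $d=1$ contributes, so $\beta(n)=\lambda_1^2/K$, and evaluating $K$ by the standard Mertens-type computation (which is where the local densities $\gamma(p)$, and hence the singular series $\mathfrak{S}_F$, enter) yields $\beta(n) \gg_k \mathfrak{S}_F^{-1}\log^k R$. For (ii), on $0<n\leq N$ we have $F(n) \ll_k N^{k+1}$, so $F(n)$ has $\ll_{k,\epsilon} N^{\epsilon}$ divisors and the divisor sum has $\ll_{k,\epsilon} N^{\epsilon}$ terms, each with $|\lambda_d| \ll 1$; thus $\beta(n) \ll_{k,\epsilon} N^{\epsilon}$.

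For the Fourier side one expands, for each pair $(d_1,d_2)$, the indicator $\mathbf 1_{[d_1,d_2]\mid F(n)}$ as a sum over residues $r$ modulo $q:=[d_1,d_2]$ with $F(r)\equiv 0 \pmod q$, then writes each residue-class indicator as $\tfrac1q\sum_{a\bmod q}e_q(a(n-r))$; since $q=[d_1,d_2]\leq d_1d_2\leq R^2$ this produces an expansion $\beta(n)=\sum_{q\leq R^2}\sum_{a\in\Z_q^*}w(a/q)e_q(-an)$ after reducing fractions to lowest terms, which is (iii). The coefficients $w(a/q)$ factor (by multiplicativity of the $\lambda_d$ and of $\gamma$) into local pieces, each built from an exponential sum $\sum_{r:\,F(r)\equiv 0\,(q)}e_q(ar)$ over roots of $F$; at a prime power $p^2\mid q$ the local piece is empty because the weights live on squarefree moduli, giving the first vanishing in (iv), and if $\gamma(p)=1$ the local sieve factor is trivial, giving the second; in the remaining cases the root count of $F$ modulo $q$ is $O_{k,\epsilon}(q^\epsilon)$ and a crude bound gives $|w(a/q)|\ll_{k,\epsilon}q^{\epsilon-1}$. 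The only genuinely delicate point — and the reason this is packaged as a self-contained proposition in \cite{gt-chen} — is obtaining that last bound \emph{uniformly} in the coefficients $a_i,b_i$ of $F$ (allowed to grow up to size $N$) while simultaneously fixing the normalization $K$ so that $w(0)=w(1)=1$ exactly; once that is in hand everything else is routine Selberg-sieve bookkeeping, and I would refer to \cite{gt-chen} for the details.
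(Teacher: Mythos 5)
You are right that the paper offers no proof of this proposition: it is quoted verbatim from \cite{gt-chen} and used as a black box, so citing that reference is precisely what the paper does. Your background sketch of the Green--Tao enveloping-sieve construction (a normalized Selberg divisor-sum square, with the normalization fixed so that $w(0)=w(1)=1$, and the Fourier expansion obtained by expanding the congruence indicators modulo $[d_1,d_2]$) is a faithful summary of that source, though it plays no role in the paper's own argument.
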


It should be mentioned that all the implied constants depend on $k$, but not on $F$. Moreover, $\beta_{R}$ enjoys the following properties:
\begin{proposition}[Discrete majorant property, Proposition 4.2, \cite{gt-chen}]For every $q>2$, we have
$$\left( \sum_{b \in \ZN} \left| \E_{1 \leq n \leq N} a_{n} \beta_{R}(n)e_{N}(-bn) \right|^{q} \right)^{1/q}\ll_{q,k} \left(\E_{1\leq n \leq N}|a_{n}|^2 \beta_{R}(n) \right)^{1/2}$$
\end{proposition}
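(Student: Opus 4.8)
This is Proposition~4.2 of \cite{gt-chen} --- a \emph{restriction estimate} for the Selberg (``enveloping'') sieve $\beta_R$ --- so the most efficient route is to cite it as a black box. For orientation, here is the shape of the argument I would follow.

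\textbf{Step 1 (even exponents).} First prove the estimate when $q=2s$ is an even integer $\ge 4$, by the Hardy--Littlewood circle method. Expanding the $\ell^{2s}$ sum and using $\sum_{b\in\ZN}e_N(bt)=N\,\mathbf 1_{N\mid t}$,
$$\sum_{b\in\ZN}\big|\widehat{a\beta_R}(b)\big|^{2s}=\frac1{N^{2s-1}}\sum_{\substack{n_1,\dots,n_s,\,m_1,\dots,m_s\\ n_1+\dots+n_s\equiv m_1+\dots+m_s\ (N)}}\ \prod_i a_{n_i}\beta_R(n_i)\prod_j\overline{a_{m_j}}\,\beta_R(m_j),$$
and Cauchy--Schwarz (separating the $n$'s from the $m$'s) bounds the right side by $\ll_s N^{1-2s}\sum_t\big(h^{*s}(t)\big)^2$, where $h(n)=|a_n|\beta_R(n)$ and $h^{*s}$ is its $s$-fold additive convolution. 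Since $\sum_n h(n)\le\big(\sum_n|a_n|^2\beta_R(n)\big)^{1/2}\big(\sum_n\beta_R(n)\big)^{1/2}=N^{1/2}\big(\sum_n|a_n|^2\beta_R(n)\big)^{1/2}(1+o(1))$ (using $\sum_n\beta_R(n)=N(1+o(1))$), a \emph{perfectly flat} $h^{*s}$ --- i.e. $\sum_t(h^{*s}(t))^2=(\sum_n h(n))^{2s}/N$ --- would give exactly $\sum_b|\widehat{a\beta_R}(b)|^{2s}\ll_s(\E_n|a_n|^2\beta_R(n))^s$. The real work is to show $h^{*s}$ is flat enough: the local (sieve) conditions cutting out $\beta_R$ at the $s$ distinct summands are asymptotically independent, so convolving decorrelates the spikes of $\beta_R$, leaving only $O_{s,k}(1)$ exceptional $t$ at which $h^{*s}(t)$ is larger by a power of $\log R$, and these are negligible in $\ell^2$ because $\log^{O(1)}R\ll N$. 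This is where the Fourier data \eqref{eq3.20}--\eqref{star} of $\beta_R$ enters. The estimate then holds for all $q\ge 4$ by the nesting $\|\cdot\|_{\ell^q}\le\|\cdot\|_{\ell^{q'}}$ for $q\ge q'$.

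\textbf{Step 2 (the range $2<q<4$).} This is the delicate part, and it is genuinely needed here (the transference principle applies the majorant property for $q$ just above $2$). One cannot simply interpolate against $q=2$: Plancherel gives $\|\widehat{a\beta_R}\|_{\ell^2}^2=\E_n|a_n|^2\beta_R(n)^2$, and replacing $\beta_R^2$ by $\beta_R$ costs a factor $\|\beta_R\|_\infty$, which by the crude bound \eqref{upper-crude} is only $N^{\epsilon}$ --- far too lossy. Instead one combines the $\ell^4$ estimate of Step~1 with the \emph{continuous} restriction bound for $\beta_R$ (the same circle-method input applied to $\int_{\T}|\sum_{n\le N}a_n\beta_R(n)e(n\alpha)|^{2s}\,d\alpha$) and runs an $\epsilon$-removal argument in the style of Bourgain to reach every $q>2$ with no loss. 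This is precisely the restriction theory of the Selberg sieve carried out in \cite{gt-chen}, and I would follow it verbatim.

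\textbf{The main obstacle} is the \emph{loss-free} nature of the claim. Because $\beta_R$ is unbounded --- by \eqref{sing-series} it carries spikes of size $\asymp\mathfrak S_F^{-1}\log^kR$ --- every naive maneuver (Cauchy--Schwarz against $\|\beta_R\|_\infty$, interpolating through $q=2$, Young's inequality for convolutions) bleeds a factor $\log^{O(1)}R$ or $N^{\epsilon}$, whereas the inequality asserts a constant depending on $q$ and $k$ only. Both the flatness of the convolution powers in Step~1 and the $\epsilon$-removal in Step~2 are designed exactly to exploit the fine arithmetic structure \eqref{eq3.20}--\eqref{star} strongly enough to kill all such losses.
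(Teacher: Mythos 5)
The paper does not prove this proposition; it cites it directly as Proposition 4.2 of \cite{gt-chen}, so treating it as a black box is exactly the paper's approach. Your supplementary sketch correctly identifies the essential difficulties---$\beta_R$ is unbounded, naive interpolation through $q=2$ would cost a factor of $\|\beta_R\|_\infty \ll N^\epsilon$, and the range $2<q<4$ therefore requires Bourgain's $\epsilon$-removal technique exploiting the Fourier structure \eqref{eq3.20}--\eqref{star}---even if the detailed mechanics in \cite{gt-chen} proceed through a continuous $L^q(\T)$ restriction estimate that is then discretized, rather than through the convolution-flatness heuristic you sketch for the even-$q$ base case.
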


\begin{proposition}[Lemma 4.1, \cite{gt-chen}]
Suppose $R \leq \sqrt{N}$. Then $\E_{1\leq n \leq N}\beta_{R}(n)\ll 1$.
\end{proposition}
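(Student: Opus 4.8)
Since the discrete majorant property stated just above (Proposition 4.2 of \cite{gt-chen}) is already at our disposal, the quickest route is to extract the bound from it. Apply that inequality with the constant sequence $a_n\equiv1$ and, say, $q=3$: the right-hand side becomes $\bigl(\E_{1\le n\le N}\beta_R(n)\bigr)^{1/2}$, while the left-hand side, being a sum over $b\in\ZN$, is at least its $b=0$ term, which equals $\E_{1\le n\le N}\beta_R(n)$ (a non-negative real, since $\beta_R\ge0$). Writing $X:=\E_{1\le n\le N}\beta_R(n)\in[0,\infty)$, this yields $X\ll_k X^{1/2}$, hence $X\ll_k1$. This is admittedly a shortcut only in hindsight — in \cite{gt-chen} the logical order is the reverse, Lemma 4.1 being an ingredient in the proof of the majorant property — so it is worth also recording the self-contained argument.

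That argument is the classical Selberg (enveloping) sieve computation. Up to normalization, $\beta_R(n)$ is the square of a level-$R$ truncated divisor sum detecting that $F(n)$ is free of prime factors $\le R$: one has $\beta_R(n)=V(R)\bigl(\sum_{d\mid F(n),\,d\le R}\lambda_d\bigr)^2$, with the $\lambda_d$ supported on squarefree $d\le R$, $\lambda_1=1$, and $V(R)=\sum_{d\le R}\mu^2(d)g(d)$ where $g$ is the multiplicative function $g(p)=\rho(p)/(p-\rho(p))$ attached to the local root counts $\rho(q)=\#\{n\bmod q:q\mid F(n)\}$ — all of which can equally be read off from the Fourier expansion (iii). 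Expanding the square and interchanging the order of summation,
$$\E_{1\le n\le N}\beta_R(n)=\frac{V(R)}{N}\sum_{d_1,d_2\le R}\lambda_{d_1}\lambda_{d_2}\,\#\{1\le n\le N:[d_1,d_2]\mid F(n)\},$$
and by the Chinese Remainder Theorem $\#\{1\le n\le N:q\mid F(n)\}=\rho(q)N/q+O(\rho(q))$. Here is precisely where the hypothesis $R\le\sqrt N$ enters: it forces every modulus $[d_1,d_2]\le R^2$ to be $\le N$, so that each count is its main term $\rho([d_1,d_2])N/[d_1,d_2]$ plus a genuinely smaller remainder. The main term of $\E_{1\le n\le N}\beta_R$ is then $V(R)\sum_{d_1,d_2\le R}\lambda_{d_1}\lambda_{d_2}\rho([d_1,d_2])/[d_1,d_2]=V(R)\cdot V(R)^{-1}=1$, since the $\lambda_d$ and $V(R)$ are chosen to diagonalize and minimize exactly this Selberg quadratic form.

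Everything therefore reduces to bounding the error term $\frac{V(R)}{N}\sum_{d_1,d_2\le R}|\lambda_{d_1}\lambda_{d_2}|\,\rho([d_1,d_2])$, and this is the step I expect to be the main obstacle. The naive estimate $|\lambda_d|\le1$ only gives $O_k\!\bigl(V(R)R^2(\log R)^{O_k(1)}/N\bigr)$, which is too lossy once $R$ approaches $\sqrt N$; one must instead use the genuine decay of the Selberg weights, $|\lambda_d|\ll_k k^{\omega(d)}/d$. Writing $\rho([d_1,d_2])=\rho(d_1)\rho(d_2)/\rho((d_1,d_2))$, separating out the gcd, and invoking the standard multiplicative estimates $\sum_{d\le R}g(d)\rho(d)\ll_k(\log R)^{O_k(1)}$ and $\sum_e g(e)^2\rho(e)\ll_k1$, the double sum collapses to $\ll_k(\log R)^{O_k(1)}$, so the error is $\ll_k V(R)(\log N)^{O_k(1)}/N$. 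Checking that this is genuinely $O_k(1)$ uniformly — in $N$, in $R\le\sqrt N$, and in the admissible forms $F$ permitted in the construction — is the careful part of the bookkeeping; together with the main term it gives $\E_{1\le n\le N}\beta_R(n)=1+o(1)$, a fortiori $\ll1$.
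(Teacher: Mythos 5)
The paper does not prove this statement: it is quoted verbatim from Green--Tao \cite{gt-chen} (their Lemma~4.1) and used as a black box, so there is no ``paper's own proof'' to compare against. Given that, let me assess the two routes you sketch.

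Your first route --- feeding $a_n\equiv 1$ and $q=3$ into the discrete majorant property (Proposition~4.2 of \cite{gt-chen}) and deducing $X\ll_k X^{1/2}$, hence $X\ll_k 1$ --- is a correct deduction from the two propositions as they are \emph{stated} in this paper, since the implied constant in the majorant property depends only on $q$ and $k$. You are right, though, that it is circular relative to the source: in \cite{gt-chen} the mean-value bound is an input to the proof of the restriction/majorant estimate, not a consequence of it, so this can only be offered as a consistency check, not a proof.

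Your second, ``self-contained'' route has the correct skeleton (expand the square, Chinese Remainder Theorem, main term $V(R)\cdot V(R)^{-1}=1$, error term from the $O(\rho([d_1,d_2]))$ remainders), and you are right that the bound $|\lambda_d|\le 1$ alone does not close the argument at $R$ near $\sqrt N$. However, the specific fix you invoke, $|\lambda_d|\ll_k k^{\omega(d)}/d$, is not a property of Selberg weights. The optimal $\lambda_d$ satisfy $\lambda_1=1$ and $|\lambda_d|\le 1$, but they do \emph{not} decay like $1/d$; their typical size for $d\le R^{1-\varepsilon}$ is $\asymp 1$, with decay only as $\log d/\log R\to 1$ (roughly $|\lambda_d|\ll\prod_{p\mid d}(1-g(p))^{-1}\cdot(\log(R/d)/\log R)^k$ in the dimension-$k$ setting). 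With a $1/d$ decay the error sum would collapse to $(\log R)^{O_k(1)}$ and everything would be easy; without it one must either (a) track the genuine, log-power savings in $\sum_{d\le R}|\lambda_d|$ and accept a corresponding loss of logs in the range of $R$, or (b) work instead from the Fourier expansion in part~(iii) of Proposition~3.1 (which is what Green--Tao's machinery is set up for, and which converts the error analysis into a sum over Farey fractions $a/q$ with $|w(a/q)|\ll_\varepsilon q^{\varepsilon-1}$). As written, the step ``together with the main term it gives $\E\beta_R=1+o(1)$'' rests on a false premise, and the explicit claim about the weights should be removed or replaced.

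A smaller point: the condition ``$R\le\sqrt N$'' as stated is exactly at the edge where the crude error analysis breaks down; in the application the paper takes $R=\lfloor N^{1/20}\rfloor$, which is well inside the range where any of these arguments has plenty of slack, so the precise threshold is not an issue for the paper's purposes.
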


Suppose $\A$ is a subset of positive relative density of the primes. Let $t$ be a large number (independent of $N$), and $W=W_{t}=\prod_{p\leq t}p$. We will assume at all times that $W<N^{\kappa}$, where $\kappa$ is the constant as in Corollary \ref{trans2}. By the pigeonhole principle we can choose $b \in X_{W}$ such that the set $X=\{0 \leq n \leq N/2: \lambda(W)n+b \in \A \}$ satisfies
\begin{eqnarray}
|X| &\gg& \frac{1}{\phi(\lambda(W))}\frac{N \lambda(W)}{\log (N\lambda(W))} \nonumber \\
 &\gg& \frac{\lambda(W)}{\phi(\lambda(W))} \frac{N}{\log N} \nonumber \\
 &\gg& \prod_{p \leq t}(1-1/p)^{-1} \frac{N}{\log N} \nonumber \\
 &\gg& \log t \frac{N}{\log N} \label{eq1}
\end{eqnarray}
for infinitely many $N$. We may assume henceforth that $N$ satisfies the inequality (\ref{eq1}). Let us now consider the polynomial $F(n)=\lambda(W)n+b$. Then it is easy to see that $\mathfrak{S}=\prod_{p \leq t}(1-1/p)^{-1} \ll \log t$.

Now let $R = [N^{1/20}]$ and let $\beta_{R}:\Z \rightarrow R^{+}$ be the enveloping sieve associated to $F$ and $R$. Let $\nu$ be the restriction of $\beta$ on $\{1,\ldots,N \}$ which may be regarded as a function on $\ZN$. Then we have $\nu(n) \gg \mathfrak{S}^{-1} \log N 1_{X}(n) \gg \frac{1}{\log t} \log N 1_{X}(n)$.
\begin{lemma} [Lemma 6.1,\cite{gt-chen}] \label{pseudorandom}
$\widehat{\nu}(a)=\delta_{a,0} + O(t^{-1/2})$.
\end{lemma}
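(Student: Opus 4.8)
The plan is to compute $\widehat{\nu}(a) = \E_{1 \le n \le N} \nu(n) e_N(an)$ directly from the Fourier expansion \eqref{eq3.20} of the enveloping sieve, exactly as in \cite[Lemma 6.1]{gt-chen}. First I would substitute $\nu(n) = \beta_R(n) \mathbf{1}_{1 \le n \le N}$ (with $R = [N^{1/20}]$, so in particular $R \le \sqrt N$) and expand
$$\widehat{\nu}(a) = \sum_{q \le R^2} \sum_{b \in \Z_q^*} w(b/q) \, \E_{1 \le n \le N} e_q(-bn) e_N(an).$$
The inner exponential sum over an interval of length $N$ is a geometric progression with ratio $e(a/N - b/q)$; its normalized size is $\min(1, \|a/N - b/q\|^{-1}/N)$ up to constants. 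The diagonal-type contribution comes from pairs $(b,q)$ for which $b/q$ is extremely close to $a/N$; since $q \le R^2 = N^{1/10}$ is much smaller than $N$, for each $a$ there is at most one fraction $b/q$ with $\|a/N - b/q\| \ll 1/N$, and it must have $q \mid$ (something of size $\le R^2$), so this happens only in controlled circumstances — when $a = 0$ it picks out $q = 1$, $w(0) = 1$, giving the main term $\delta_{a,0}$.

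The substance is bounding the off-diagonal terms. For each pair $(b,q)$ with $b/q \neq a/N$, I would use $|\E_{1 \le n \le N} e_q(-bn)e_N(an)| \ll \min(1, 1/(N\|a/N - b/q\|))$ together with the coefficient bound $|w(b/q)| \ll_{k,\epsilon} q^{\epsilon - 1}$ from \eqref{star}. Grouping by $q$ and summing over $b \in \Z_q^*$ the quantities $1/(N \|a/N - b/q\|)$: the fractions $b/q$ for fixed $q$ are $1/q$-separated, so this sum is $\ll (q/N)\log N + 1$, and after weighting by $q^{\epsilon-1}$ and summing over $q \le R^2 = N^{1/10}$ one gets a bound of the shape $O(N^{1/10 + \epsilon}/N + N^{\epsilon}) \cdot (\text{stuff})$ — which by itself is not yet $O(t^{-1/2})$. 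The point, following Green--Tao, is that one does \emph{not} throw away the cancellation in $w$: the singular-series normalization built into $w$ means the coefficients attached to the moduli $q$ divisible by the primes $p \le t$ are small, and since $W = \prod_{p \le t} p$ divides $\lambda(W)$ which divides the leading coefficient of $F$, the Fourier vanishing property (iv) kills many terms and the arithmetic factor $\mathfrak{S}^{-1} \ll 1/\log t$ — wait, more precisely the gain of $t^{-1/2}$ comes from the Euler-product estimate $\prod_{p \le t}(1 + O(p^{-1/2+\epsilon})) $-type bounds on the relevant partial sums of $w(b/q)$ over $q$ with a fixed radical, which is precisely the computation carried out in \cite[Section 6]{gt-chen}.

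Since $F(n) = \lambda(W)n + b$ is an affine (degree $k=1$ in the Green--Tao setup, or rather a single linear form) and the $W$-trick has already been applied — $b \in X_W$ and the modulus $\lambda(W)$ absorbs the small primes — the hypotheses of the enveloping-sieve propositions quoted above are met with $\mathfrak{S} = \mathfrak{S}_F \ll \log t$, and the conclusion $\widehat{\nu}(a) = \delta_{a,0} + O(t^{-1/2})$ follows verbatim from the argument in \cite{gt-chen}; I would simply cite \cite[Lemma 6.1]{gt-chen} after noting that our $F$, $R$, and $W$ satisfy its hypotheses. The main obstacle, if one insists on reproducing rather than citing, is the bookkeeping in the off-diagonal sum: one must track the interaction between the interval-sum decay $\min(1, 1/(N\|a/N-b/q\|))$ and the multiplicative structure of $w(b/q)$ carefully enough to extract the genuine $t^{-1/2}$ saving (which is where the singular series and the coprimality to $W$ are used) rather than a weaker power-of-$N$ bound.
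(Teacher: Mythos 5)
You conclude by simply citing \cite[Lemma 6.1]{gt-chen}, which is exactly what the paper does --- no proof is given, only the citation --- so the approaches coincide. One caution about the mechanism you sketch along the way: the $t^{-1/2}$ saving does not come from an Euler-product cancellation among the $w(b/q)$, but from a support condition: for $F(n)=\lambda(W)n+b$ with $(b,W)=1$ one has $\gamma(p)=1$ for every $p\le t$, and the Selberg-sieve weight $w(b/q)$ constructed in \cite{gt-chen} vanishes whenever $q>1$ has a prime factor $p$ with $\gamma(p)=1$ (a structural fact slightly stronger than property (iv) as quoted), so the only contributing moduli $q>1$ are square-free, coprime to $W$, hence exceed $t$; then $|w(b/q)|\ll q^{-1/2+\varepsilon}\le t^{-1/2+\varepsilon}$, combined with the observation that at most one fraction $b/q$ with $q\le R^2$ can lie within $O(R^{-4})$ of $a/N$ while all other geometric sums are $O(R^4/N)$, gives the stated bound.
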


\begin{proof}[Proof of Theorem \ref{maint1}]
Let us now define the function $f: \ZN \rightarrow \R^{+}$ by  $$f(n)=c \frac{\log N}{\log t} 1_{X}(n)$$ Let us verify the conditions of Proposition \ref{trans}. Clearly $0 \leq f \leq \nu$ for $c$ appropriately chosen, and $\E_{\ZN}f \geq \delta >0$, where $\delta$ depends only on the upper relative density of $\A$ in the primes.

Fix any $2<q<4s_{0}/(2s_{0}-1)$. By Propositions 4 and 5 (for the sequence $a_{n}=\frac{f(n)}{\nu(n)}$, with the convention that $a_{n}=0$ if $f(n)=\nu(n)=0$), we have
$$\| \widehat{f} \|_{q}= \left( \sum_{b \in \ZN} \left| \E_{1 \leq n \leq N} f(n)e_{N}(-bn) \right|^{q} \right)^{1/q} \ll \left( \E_{1\leq n \leq N}\frac{f(n)^2}{\nu(n)} \right)^{1/2} \ll \left(\E_{1\leq n \leq N}\nu(n) \right)^{1/2} \ll 1$$
Thus the condition (\ref{con4}) of Proposition \ref{trans} is satisfied.
Finally, the condition (\ref{con3}) of Proposition \ref{trans} follows from Lemma \ref{pseudorandom} with $\eta=O(t^{-1/2})$.

Proposition \ref{trans} now tells us that
\begin{equation}
\sum_{a,d \in \ZN}f(a)f(a+d)S_{h_{W}}(d) \geq c(h,\delta) - O(t^{-1/2}) \label{eq2}
\end{equation}
for some constant $c$ depending on $h$ and $\delta$, for $N$ sufficiently large depending on $h$, and for every $W \leq N^{\kappa}$. Thus for $t$ sufficiently large depending on $h$ and $\delta$, for $N$ sufficiently large depending on $t$, we have $\sum_{a,d \in \ZN}f(a)f(a+d)S_{h_{W}}(d)>0$, which implies the existence of a couple $a, a' \in X$ and $d$ such that $$a-a'=h_{W}(d)=\frac{h(Wd+r_{W})}{\lambda(W)}\neq 0$$
A priori, this is an equality in $\ZN$, but since $a, a', h_{W}(d) <\frac{N}{2}$, this is an equality in $\Z$. Therefore, $h(Wd+r_{W})=(\lambda(W) a +b)-(\lambda(W) a' +b)$ is the difference of two elements of $\A$, as desired.
\end{proof}

\begin{proof}[Proof of Theorem \ref{maint2}] The proof goes along the lines that of Theorem \ref{maint1}. Suppose $\A$ is a subset of positive relative density of the Chen primes. This time, we consider $X=\{0 \leq n \leq N/2: \lambda(W)n+b \in \A \}$ for some appropriately chosen $b$, $F=(\lambda(W)n+b)(\lambda(W)n+b+2))$, and $f=c \frac{\log N}{\log^2 t}1_{X}(n)$.
\end{proof}
\begin{remarks}What we have proved so far is that not only is there a couple $p_1, p_2$ such that $p_1-p_2=h(n)$ for some $n$, but the number of such couples is of the correct magnitude. More precisely, if $\A$ is a subset of positive upper relative density of the primes, then we have
$$\sharp \{(p_1,p_2): p_1, p_2 \in \A, p_1, p_2 \leq N, p_1-p_2=h(n) \textrm{ for some }n \} \gg \frac{N^{1+1/k}}{\log^2 N}$$
where the implied constant depends only on $h$ and the upper relative density of $\A$. A similar conclusion holds for subsets of positive relative density of the Chen primes.
\end{remarks}

\section{Further discussions}
\subsection{A word on bounds}
Recall that in the estimate (\ref{eq2}), $c(h, \delta)$ has the form $$c(h,\delta)= \exp \left( -c_1 \delta^{-(k-1)}\log^{\mu} \left( \frac{2}{\delta} \right) \right)$$
while the error term $O(t^{-1/2})$ takes the form $(C/ \epsilon)^{(M/\epsilon)^{q}}t^{-1/2}$, where $M, C$ are constants depending at most on $k$, $c_1$ is a constant depending on $h$, and $\epsilon$ is a power of $c(h, \delta)$. Recall that $t \ll \log W \ll_{k} \log N$. A calculation shows that the error term is dominated by the main term as long as
$$\delta \gg_{h} \frac{(\log_4 N)^{\mu/(k-1)}}{(\log_3 N)^{1/(k-1)}}$$
(where $\log_{i}$ denotes the number of times the $\log$ has to be taken). Thus we have proved that, inside any subset of size $\gg_{h} \frac{N}{\log N} \frac{(\log_4 N)^{\mu/(k-1)}}{(\log_3 N)^{1/(k-1)}}$ of the primes in $\{1, \ldots, N\}$, there must exist two distinct elements $p_1, p_2$ such that $p_1-p_2=h(n)$ for some $n \in \Z$. A similar conclution holds for the Chen primes. Such a bound is of course far weaker than Pintz-Steiger-Szemer\'{e}di type bounds.
\subsection{On the transference principle}
Our transference principle relies on two properties of the intersective set $H= \{ h(n): n \in \Z \}$, namely Theorem \ref{lucier} and Proposition \ref{w}. Theorem \ref{lucier} says that the number of solutions to $a-a'=m$ where $a, a'$ are in any given dense set and $m \in H$ is of the expected order of magnitude. Proposition \ref{w} requires that the number of representations of any number as a sum of elements of $H$ be bounded by the expected order of magnitude. We may ask for which other classes of intersective sets these two properties hold. A natural candidate is the set of values of polynomials of prime variables. It is known that the set $\{Q(p): p \textrm{ prime}\}$ is intersective, where $Q \in \Z[x]$ is such that $Q(1)=0$; however there are other examples such as $Q(p)=(p-3)(p-5)$.
Other examples of intersective sets include $\{ [\alpha n^2]: n \in \Z^{+} \}$ for irrational $\alpha$, and more generally the set of values of certain generalized polynomials (whose intersectivity is established in \cite{bh}). We may ask the same question for generalized polynomials in prime variables such as $\{[\alpha p^2]: p \textrm{ prime}\}$ for $\alpha$ irrational (whose intersectivity is not yet established yet but very plausible). However, as we have seen how the $W$-trick comes into play, we will have to take into account uniform versions of the two properties, which don't seem to be a simple matter.

\end{document}